\DeclareMathOperator{\sign}{sign}
\newtheorem{theorem}{Theorem}
\newtheorem{corollary}[theorem]{Corollary}
\newtheorem{lemma}{Lemma}
\newtheorem{proposition}{Proposition}
\newcommand{\rmat}{{}_{-S}\mathcal{M}}
\newcommand{\reorientation}{{}_{c}\mathcal{M}_{\mathcal{A}}}
\newcommand{\rlawrence}{\mathcal{M}_{\mathcal{A}}}
\newcommand{\rmatrix}{{}_{c}\mathcal{A}}
\newcommand{\rmatrices}{{}_{S}\mathcal{A}}
\newcommand{\conv}{\mathit{conv}}
\newcommand{\relint}{\mathit{relint}}
\newcommand{\TT}{\mathrm{TT}}
\newcommand{\LT}{\mathrm{LT}}
\newcommand{\UD}{\mathrm{UD}}
\newcommand{\LD}{\mathrm{LD}}
\begin{document}

\title{Projective Equivalences of k-neighbourly Polytopes}

\author{N. Garc\'{i}a Col\'{i}n$^{\dag}$}
\address{${\dag}$ Instituto de Matem\'{a}ticas, \'{A}rea de la Investigaci\'{o}n Cient\'{i}fica,
Circuito Exterior, Ciudad Universitaria, Coyoac\'{a}n, 04510. M\'{e}xico, D.\ F.}
\email{garciacolin.natalia@gmail.com}

%
\author{D.G. Larman$^{*}$}
\address{${*}$ Department of Mathematics, University College London, Gower Street \\ London WC1E 6BT, UK}
\email{dgl@math.ucl.ac.uk}

\date{March 29th, 2011 and in revised form (date2).}
\subjclass[2000]{}
\keywords{}

\thanks{}

\begin{abstract}
We prove the following theorem, which is related to McMullen's problem on projective transformations of polytopes; let $2\leq k\leq \lfloor{\frac{d}{2}}\rfloor$ and $\nu{(d, k)}$ be the largest number such that any set of $\nu{(d,k)}$ points lying in general position in $\mathbb{R}^d$ can be mapped by a permissible projective transformation onto the vertices of a k-neighborly polytope, then $d + \left\lceil{ \frac{d}{k}}\right\rceil +1  \leq \nu{(d, k)} < 2d-k +1$.
\end{abstract}

\maketitle

\section{Introduction}

In 1970, Peter McMullen posed the following problem:

Determine the largest number $\nu{(d)}$ such that any set of $\nu{(d)}$ points lying in general position in ${\mathbb{R}}^d$ can be mapped by a permissible projective transformation onto the vertices of a convex polytope.

Considering the Gale diagram of the set of points in the problem above, D.\ Larman reformulated the question as follows:

Determine the smallest number $\lambda{(d)}$ such that for any set $X$ of $\lambda{(d)}$  points in $\mathbb{R}^d$ there exists a partition of $X$ into two sets $A$ and $B$ such that 
\[\conv{(A\backslash {x})}\cap \conv{(B\backslash {x})}  \neq   \emptyset, \quad \forall  \; x \epsilon X.\]

Where the relationship between $\nu$ and $\lambda$ is
\[\lambda(d)= \min_{w \in \mathbb{N}}\{ w | w \leq \nu(w-d-2) \}.\]

Using the reformulation, D.\ Larman \cite{DL} found the lower bound $ {2d +1} \leq \nu{(d)}$ by proving that $\lambda{(d)} \geq 2d+3$. He also proved that this bound is sharp in the cases where $d=1, 2$ and $3$, by constructing sets of four, six and eight points which do not have the required partition, as stated above. This supports his conjecture that the lower bound is sharp for higher dimensions.  He also found a set of $(d+1)^{2}$ points  in $\mathbb{R}^{d}$  such that no projective transformation maps them into the vertices of a convex polytope, thus proving
\[ {2d +1} \leq \nu{(d)} < {(d+1)^2}.\]

In 2001, using computational methods  D.\ Forge, M.\ Las Vergnas and P.\ Schuchert \cite{FORGE} found a divisible configuration of 10 points in dimension 4, confirming the conjecture for d=4.

I.\ Da Silva and R.\ Cordovil \cite{CORDOVIL} obtained a different equivalence of the original problem, using the fact that the matroid of a set of points in general position spanning $\mathbb{R}^d$ is a uniform oriented matroid of rank $d+1$:

Determine the smallest integer $n=\nu{(r)}$ such that for any orientation of the uniform rank $r=d+1$ oriented matroid  on a set of $\nu(r)$ elements, $\mathcal{M}$, there is an acyclic reorientation of $\rmat$ that has a positive circuit.

Using the reformulation above, in 1986, M.\ Las Vergnas \cite{LV} proved that
\[\nu{(d)} \leq \frac{(d+1)(d+2)} { 2}. \]

This bound was further refined by J.\ Ram\'{i}rez Alfons\'{i}n \cite{RA} to
\begin{equation} \label{upperbound}
\nu(d) < 2d  + \left\lceil{ \frac{d+1}{2}} \right\rceil.
\end{equation}

Building on the work by I. Da Silva, R. Cordovil and J. Ramirez-Alfonsin, we prove that the following two theorems, and their equivalence:

\begin{theorem}\label{PROJECTIVE}
Let $2\leq k\leq \lfloor{\frac{d}{2}}\rfloor$ and $\nu{(d, k)}$ be the largest number such that any set of $\nu{(d,k)}$ points lying in general position in $\mathbb{R}^d$ can be mapped by a permissible projective transformation onto the vertices of a k-neighborly polytope, then $d + \left\lceil{ \frac{d}{k}}\right\rceil +1  \leq \nu{(d, k)} < 2d-k +1$.
\end{theorem}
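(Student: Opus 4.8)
The plan is to prove Theorem~\ref{PROJECTIVE} through its oriented‑matroid companion, and then to bound the resulting quantity by Gale duality, one inequality at a time. \emph{The dictionary and the equivalence.} A set $X$ of $n$ points in general position in $\mathbb{R}^{d}$ is recorded by a uniform oriented matroid $\mathcal{M}_{X}$ of rank $d+1$ on $n$ elements, and the point sets projectively equivalent to $X$ correspond to the reorientations of $\mathcal{M}_{X}$. Using that a polytope with vertex set $X$ is $k$-neighbourly exactly when $\conv(S)\cap\conv(X\setminus S)=\varnothing$ for every $S$ with $|S|\le k$, one checks that a reorientation of $\mathcal{M}_{X}$ is the vertex set of a $k$-neighbourly polytope if and only if every circuit $(C^{+},C^{-})$ of that reorientation satisfies $\min(|C^{+}|,|C^{-}|)\ge k+1$ (which forces acyclicity automatically). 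Passing to the Gale dual $\mathcal{M}_{X}^{*}$, a uniform oriented matroid of rank $r:=n-d-1$, circuits become cocircuits, so the condition reads: viewing $\mathcal{M}_{X}^{*}$ as $n$ vectors in $\mathbb{R}^{r}$, there is a reorientation in which every open halfspace through the origin contains at least $k+1$ of them. Call such a reorientation \emph{$(k{+}1)$-ample}. Then Theorem~\ref{PROJECTIVE} is the assertion that $\nu(d,k)$ is the largest $n$ for which every uniform rank-$(n{-}d{-}1)$ oriented matroid on $n$ elements has a $(k{+}1)$-ample reorientation; this is simultaneously the companion theorem and the promised equivalence, and it remains to prove the two bounds.

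\emph{Lower bound.} First an elementary constraint: if a uniform rank-$r$ oriented matroid on $N$ elements has a $(k{+}1)$-ample reorientation, then choosing any $r-1$ of its elements to span a hyperplane splits the remaining $N-r+1$ elements between the two open sides, each side carrying $\ge k+1$, whence $N\ge 2k+r+1$. Now put $m:=\lceil d/k\rceil$ and $n:=d+m+1$, so that $r=m$ and, because $k\le\lfloor d/2\rfloor$ gives $d\ge 2k$, $n=d+m+1\ge 2k+m+1$. It therefore suffices to prove the reverse — the technical core — namely that \emph{every uniform rank-$m$ oriented matroid on $N\ge 2k+m+1$ elements has a $(k{+}1)$-ample reorientation}, which I would do by induction on $m$. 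For $m=1$ a reorientation is an arbitrary sign vector and $N\ge 2k+2$ lets each sign occur $\ge k+1$ times. For $m=2$ all uniform rank-$2$ oriented matroids on $N$ elements form, up to relabelling, a single reorientation class (the cyclic order of the $N$ lines is the only invariant), and that class contains a $(k{+}1)$-ample representative once $N\ge 2k+3$ (vertices of a regular $N$-gon), hence so does every member. For $m\ge 3$ contract a generic element $e$: the contraction is uniform of rank $m-1$ on $N-1\ge 2k+(m-1)+1$ elements, and its cocircuits are exactly those cocircuits of the original matroid whose hyperplane passes through $e$, so by induction one balances all cocircuits avoiding $e$. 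The cocircuits through $e$ restrict, over the remaining elements, to cocircuits of the \emph{deletion} by $e$, which is still of rank $m$; so the induction must be strengthened to carry information about deletions as well as contractions, and choosing the sign of $e$ — after, if necessary, a controlled revision of the reorientation already built on $E\setminus e$ — so that these cocircuits also become $(k{+}1)$-ample is exactly where $N\ge 2k+m+1$ (rather than anything larger) must be used. This is the step I expect to be the main obstacle.

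\emph{Upper bound.} For $\nu(d,k)<2d-k+1$ one must produce a single uniform rank-$(d+1)$ oriented matroid on $n=2d-k+1$ elements no reorientation of which is the vertex set of a $k$-neighbourly polytope; dually, a uniform rank-$(d-k)$ oriented matroid on $2d-k+1$ elements with no $(k{+}1)$-ample reorientation. The obstruction should again be the counting one, localized to a minor: a rank-$1$ minor on $2k+1$ elements can never be $(k{+}1)$-ample, so one wants $2k+1$ of the elements to ``behave like'' such a minor while the remaining $2d-3k$ merely supply rank — a configuration as close to a direct sum as possible. Since a genuine direct sum of two uniform oriented matroids is not uniform, and so does not correspond to a point set in general position, the construction must instead be assembled by iterated lexicographic extensions: start from the ``line'' piece on the $2k+1$ distinguished elements and extend, each step preserving uniformity and the property that every reorientation still leaves a cocircuit supported almost entirely on those $2k+1$ elements — hence with at most $k$ of them on one side. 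Showing that this property survives all the extensions, and that the final oriented matroid is realizable, is the work of this direction; it follows the pattern of Ram\'{i}rez Alfons\'{i}n's proof of $\nu(d)<2d+\lceil(d+1)/2\rceil$, with the distinguished sub-configuration rescaled from size of order $d$ to size of order $2k$.
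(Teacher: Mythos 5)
Your Gale--dual dictionary is essentially the one the paper uses in Section~\ref{equivalent}, and your counting observation that $(k{+}1)$-ampleness forces $N\ge 2k+r+1$ is correct as a \emph{necessary} condition. The fatal problem is the lemma you make the ``technical core'' of the lower bound: \emph{every uniform rank-$m$ oriented matroid on $N\ge 2k+m+1$ elements has a $(k{+}1)$-ample reorientation}. This statement is false, and in fact it contradicts the other half of the very theorem you are proving. Take $m=d-k$ and $N=2d-k+1=2m+k+1$; then $N\ge 2k+m+1$ holds precisely when $m\ge k$, i.e.\ $d\ge 2k$, which is guaranteed by $k\le\lfloor d/2\rfloor$. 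So your lemma would assert that every uniform rank-$(d-k)$ oriented matroid on $2d-k+1$ elements has a $(k{+}1)$-ample reorientation, i.e.\ that every set of $2d-k+1$ points in general position in $\mathbb{R}^d$ can be sent to the vertices of a $k$-neighbourly polytope --- the exact negation of $\nu(d,k)<2d-k+1$. The instances you actually need ($m=\lceil d/k\rceil$, $N=d+m+1$) sit at $N\approx(k+1)m+1$, which is the true threshold coming from Theorem~\ref{TPARTITION}; your hypothesis $N\ge 2k+m+1$ is far weaker than that for $m$ large compared with $k$, which is exactly the regime where the counterexamples of the upper bound live. Since the target statement is false, the inductive step you yourself flag as ``the main obstacle'' cannot be repaired by bookkeeping about deletions and contractions. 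The paper does something entirely different here: the lower bound is proved on the primal, partition side (Lemma~\ref{RADONSOTE}) by induction on $k$, starting from the Gale transform of the vertex set of a cyclic polytope, moving one point along a segment, and at the critical parameter invoking Radon's theorem to pass to a separating hyperplane in which the $(k-1)$ case applies. No induction on the corank appears, and the threshold $(k+1)d+(k+2)$ is what that geometric argument naturally produces.

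The upper-bound half of your proposal is a reasonable plan but not a proof: the entire content is to exhibit a \emph{realizable} uniform oriented matroid in which, after every reorientation, some cocircuit still has at most $k$ elements on one side, and you defer both the preservation of that property under the lexicographic extensions and the realizability of the result. The paper discharges exactly these obligations by working inside the class of Lawrence oriented matroids (which are realizable by construction), prescribing an explicit chessboard pattern, and proving by induction on the rank, via the top/bottom travel machinery, that every acyclic reorientation admits a further reorientation of at most $k$ columns producing a positive circuit (Lemmas~\ref{LBASE} and~\ref{GENERAL}). If you want to keep your architecture, you must (i) restate the lower-bound lemma with the correct threshold $N\ge(k+1)m+1$ --- at which point an induction on $m$ alone no longer closes, since contracting an element takes you outside the hypothesis --- and (ii) actually carry out the extension and realizability arguments for the upper bound.
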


\begin{theorem}\label{TPARTITION} Let $\lambda{(d, k)}$ be the smallest number such that for any set $X$ of $\lambda{(d,k)}$  points in $\mathbb{R}^d$ there exists a subdivision of X into two sets $A, B$ such that 
\[\conv{(A\backslash \{x_1, x_2,\dots ,x_k\})}\cap conv{(B\backslash \{x_1, x_2,\dots , x_k\})}  \neq   \emptyset\] $ \forall \: \: 2\leq k\leq \lfloor\frac{d}{2}\rfloor$,  $ {\{x_1, x_2, \dots , x_k\}}  \subset X$. Then $2d + k +1 \leq \lambda{(d,k)} \leq (k+1)d + (k+2)$.
\end{theorem}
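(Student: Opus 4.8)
The plan is to prove Theorem~\ref{TPARTITION} by first making precise the Gale-duality correspondence behind the ``equivalence'' of the two theorems, and then transferring the bounds of Theorem~\ref{PROJECTIVE}. I would use the classical dictionary between a point set and its Gale transform. Let $X=\{x_1,\dots,x_n\}$ be $n$ points in general position in $\mathbb{R}^d$ and let $X=A\cup B$ be a subdivision; reading $X$ together with the signs prescribed by $(A,B)$ as an \emph{affine Gale diagram}, it is the Gale transform of a labelled general-position configuration $P$ of $n$ points spanning $\mathbb{R}^{\,n-d-2}$, and every such configuration arises in this way. The combinatorial input is the face criterion for Gale diagrams: for $Y\subseteq X$, the points of $P$ indexed by $Y$ span a face of $\conv P$ if and only if the origin lies in the relative interior of the convex hull of the signed Gale vectors indexed by $X\setminus Y$, which --- in general position --- holds exactly when $\conv(A\setminus Y)\cap\conv(B\setminus Y)\neq\emptyset$.

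Under this dictionary, the requirement in Theorem~\ref{TPARTITION} that $\conv(A\setminus\{x_1,\dots,x_k\})\cap\conv(B\setminus\{x_1,\dots,x_k\})\neq\emptyset$ for \emph{every} $k$-subset $\{x_1,\dots,x_k\}\subseteq X$ says precisely that every $k$-subset of the vertices of $P$ spans a face, i.e.\ that $P$ is a $k$-neighbourly polytope. The outer quantifier ``there exists a subdivision $X=A\cup B$'' corresponds, under Gale duality, to the freedom of choosing an acyclic reorientation of the oriented matroid of $P$; by the principle underlying the oriented-matroid reformulations of McMullen's problem recalled above, these reorientations realise exactly the configurations obtained from $P$ by permissible projective transformations. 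Hence a subdivision making $P$ $k$-neighbourly exists if and only if $P$ can be carried by a permissible projective transformation onto the vertices of a $k$-neighbourly polytope. Passing to the least $n$ for which every general-position $n$-point configuration in $\mathbb{R}^{\,n-d-2}$ has this property yields the $k$-analogue of Larman's identity,
\[
\lambda(d,k)=\min\bigl\{\,w\in\mathbb{N}\ :\ w\le\nu(w-d-2,k)\,\bigr\}.
\]

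Granting this, both inequalities of Theorem~\ref{TPARTITION} follow by substituting the two halves of Theorem~\ref{PROJECTIVE}. For the upper bound take $w=(k+1)d+(k+2)$; then $w-d-2=k(d+1)$, so $\lceil(w-d-2)/k\rceil=d+1$ and $2\le k\le\lfloor(w-d-2)/2\rfloor$, whence the lower bound of Theorem~\ref{PROJECTIVE} gives $\nu(w-d-2,k)\ge(w-d-2)+\lceil(w-d-2)/k\rceil+1=w$; thus $w$ satisfies the displayed condition and $\lambda(d,k)\le(k+1)d+(k+2)$. For the lower bound let $w\le 2d+k$. When $d+2k+2\le w\le 2d+k$ one has $k\le\lfloor(w-d-2)/2\rfloor$, so the upper bound of Theorem~\ref{PROJECTIVE} gives $\nu(w-d-2,k)\le 2(w-d-2)-k<w$ (since $w<2d+k+4$); when $d+3\le w<d+2k+2$ the dimension $w-d-2$ is below $2k$, so the only $k$-neighbourly polytope on $w$ vertices in $\mathbb{R}^{\,w-d-2}$ is the simplex and $\nu(w-d-2,k)=w-d-1<w$; and for $w\le d+2$ the inequality $w\le\nu(w-d-2,k)$ is vacuous. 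Hence no $w\le 2d+k$ meets the condition and $\lambda(d,k)\ge 2d+k+1$. (Alternatively, to keep the two theorems logically independent, $\lambda(d,k)\ge 2d+k+1$ can be obtained directly by exhibiting a set of $2d+k$ points in $\mathbb{R}^d$ no subdivision of which has the $k$-robust Radon property, in the spirit of Larman's $2d+2$-point configuration for the classical case: the points are arranged so that for every bipartition some $k$-subset can be deleted leaving the two parts strictly separated by a hyperplane.)

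I expect the main obstacle to be the faithful verification of the equivalence rather than the arithmetic. One has to confirm that ``there is a subdivision with the $k$-robust property'' and ``some acyclic reorientation of the Gale-dual matroid realises a $k$-neighbourly polytope'' match in \emph{both} directions; this needs the general-position bookkeeping of the Gale transform --- a point of $\conv(A\setminus Y)\cap\conv(B\setminus Y)$ could a priori fail to place the origin in the \emph{relative} interior of the relevant hull, forcing a pass to a larger face, and this must be excluded by genericity --- together with the facts that realizability is preserved under Gale duality (so that constructions and non-existence statements transfer), and that the degenerate range $n-d-2\le 2k$, in which $k$-neighbourliness collapses to the simplex, is dealt with separately. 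If the direct route is taken for the lower bound, the technical heart becomes producing the $2d+k$-point configuration and verifying the separation property for \emph{every} bipartition.
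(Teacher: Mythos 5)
Your proposal is, in substance, the identity $\lambda(d,k)=\min\{w\in\mathbb{N} : w\le\nu(w-d-2,k)\}$ plus the arithmetic needed to transfer the bounds of Theorem~\ref{PROJECTIVE} across it. That identity is exactly what the paper establishes in Section~\ref{equivalent}, and your arithmetic is correct; but as a proof of Theorem~\ref{TPARTITION} the argument is circular. The two theorems are equivalent statements, and the paper does not prove Theorem~\ref{PROJECTIVE} first and then deduce Theorem~\ref{TPARTITION}: it proves one bound on each side directly and obtains its partner by the equivalence. In particular, the upper bound $\lambda(d,k)\le(k+1)d+(k+2)$ is proved directly (Lemma~\ref{RADONSOTE}): starting from the Gale transform of a cyclic polytope in $\mathbb{R}^{k(d+1)}$, which is $k$-divisible, a Larman-style deformation argument (slide $x_1$ toward $y$, take the supremum $t_0$ of good parameters, extract a limiting separating hyperplane $H$, apply Radon's theorem to the $d+1$ points of $X(t_0)$ forced onto $H$, and invoke the induction hypothesis on $k$ for the $kd+k+2$ points off $H$) shows that $k$-divisibility survives exchanging a point, hence holds for every general-position set of that size. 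It is this lemma that \emph{yields} the lower bound $\nu(d,k)\ge d+\lceil d/k\rceil+1$ of Theorem~\ref{PROJECTIVE}; deriving the upper bound of $\lambda$ from that inequality therefore proves nothing unless you give an independent proof of it, which you do not.

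For the lower bound $\lambda(d,k)\ge 2d+k+1$ your direction of transfer does match the paper's (the upper bound of $\nu$ is proved first and pushed through the equivalence), but the entire content is the construction you relegate to a parenthesis. The paper exhibits explicit families of realizable Lawrence oriented matroids whose sign matrices have a prescribed ``chessboard'' of black squares along the diagonal, and shows by a double induction on the rank and on $k$ (Lemmas~\ref{LBASE} and~\ref{GENERAL}, via a careful tracking of top and bottom travels) that every acyclic reorientation can be made cyclic by reorienting at most $k$ columns --- equivalently, every acyclic reorientation has a circuit $C$ with $|C^+|\le k$ or $|C^-|\le k$, so none is $k$-neighbourly. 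Neither this construction nor any substitute appears in your proposal; the ``set of $2d+k$ points for which every bipartition becomes separable after deleting some $k$-subset'' is asserted, not exhibited, and verifying such a property for \emph{every} bipartition is precisely where the work lies. In short: the reduction between the two formulations is fine and agrees with the paper, but both halves of the actual mathematical content are missing.
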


\section{Equivalence of Theorems 1 and 2}\label{equivalent}

Before we actually prove theorems 1 and 2, we begin by proving that the two statements are equivalent, as the actual proof of the bounds stated in the problems takes full advantage of their equivalence.

\begin{lemma} The following two problems are equivalent:
\begin{itemize}
\item[] Determine the largest number $\nu{(d, k)}$ such that any set of $\nu{(d,k)}$ points lying in general position in $\mathbb{R}^d$ can be mapped by a permissible projective transformation onto the vertices of a \emph{$k$-neighbourly} polytope. \\

\item[] Determine the smallest number $\mu{(d, k)}$ such that for any set $X$ of $\mu{(d,k)}$ points lying in general linear position on $S^{d-1}$, it is possible to choose a sequence $E=(\epsilon_{1},\ldots, \epsilon_{\mu(d,k)}) \in \{1,-1\}^{\mu(d,k)}$ such that for every $k$-membered subset of $X_{E}$, $ X_{E}^{k}$,  $0 \in conv( X_{E} \backslash X_{E}^{k})$, where  $X_{E}=\{\epsilon_{1}x_{1},\ldots, \epsilon_{\mu(d,k)}x_{\mu(d,k)}\}.$ \\
\end{itemize}

and the relationship between $\nu({d,k})$ and $\mu({d,k})$ is
\begin{align*} 
\nu{(d,k)} &= \max_{w \in \mathbb{N}} { \{ w \geq \mu{(w-d-1,k)} \} } ,\\
\mu{(d,k)} &= \min_{w \in \mathbb{N}} { \{ w \leq \nu{(w-d-1,k)} \} } .
\end{align*}
\end{lemma}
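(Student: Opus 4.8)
The plan is to prove the equivalence by Gale duality, passing between point configurations and their Gale transforms normalised to a sphere. To a set $X=\{x_1,\dots,x_w\}$ of $w$ points in general position in $\mathbb{R}^{w-d-1}$ one associates its Gale transform $\bar X=\{\bar x_1,\dots,\bar x_w\}$, a set of $w$ vectors in $\mathbb{R}^{d}$ (note $w-(w-d-1)-1=d$), which I then normalise to lie on $S^{d-1}$. I would invoke the two standard facts: (i) $X$ is in general position in $\mathbb{R}^{w-d-1}$ if and only if $\bar X$ is in general linear position in $\mathbb{R}^d$; and (ii) for $S\subseteq[w]$, the points $\{x_i:i\in S\}$ form the vertex set of a face of $\conv(X)$ if and only if $0\in\relint\conv\{\bar x_j:j\notin S\}$. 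In general linear position the latter coincides with $0\in\conv\{\bar x_j:j\notin S\}$, since a Carath\'eodory representation of $0$ must use exactly $d+1$ of the $\bar x_j$ and then places $0$ in the interior of their simplex; and normalising onto $S^{d-1}$ is a positive rescaling, which disturbs none of this.

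Next I would translate the two operations occurring in the two problems. First, $\conv(X)$ is a $k$-neighbourly polytope having $X$ as its full vertex set precisely when $0\in\conv\{\bar x_j:j\notin S\}$ for every $k$-subset $S$ of $[w]$; this single family of conditions already forces each $x_i$ to be a vertex, by choosing an $S$ containing $i$. Second, a permissible projective transformation of $\mathbb{R}^{w-d-1}$ acts on $\bar X$ by a common linear automorphism of $\mathbb{R}^d$ together with multiplication of each $\bar x_i$ by a nonzero scalar; allowing the hyperplane at infinity to be chosen freely — equivalently, ranging over the acyclic reorientations of the oriented matroid of $X$, in the spirit of the Da Silva--Cordovil reformulation — the signs of these scalars may be prescribed arbitrarily. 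Since whether $0$ lies in the convex hull of a subset of vectors is unaffected by a linear automorphism or a positive rescaling, I get: $X$ is carried by some permissible projective transformation onto the vertices of a $k$-neighbourly polytope if and only if there is $E\in\{1,-1\}^w$ with $0\in\conv(\bar X_E\setminus \bar X_E^{k})$ for every $k$-membered $\bar X_E^{k}\subset\bar X_E$, where $\bar X_E=\{\epsilon_1\bar x_1,\dots,\epsilon_w\bar x_w\}$.

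It then remains to quantify over all configurations. Every set of $w\ge d+1$ vectors in general linear position in $\mathbb{R}^d$ becomes, after flipping signs on a suitable subset, one that positively spans $\mathbb{R}^d$ (indeed $d+1$ of them do, after signing their unique linear dependence positively), hence after a positive rescaling sums to $0$, hence is, up to a linear automorphism, the Gale transform of a set of $w$ points in general position in $\mathbb{R}^{w-d-1}$; and the property ``some sign vector $E$ works'' is invariant under sign flips, positive rescaling and linear automorphisms. Therefore ``every $w$-point set in general position in $\mathbb{R}^{w-d-1}$ can be permissibly projectively mapped onto the vertices of a $k$-neighbourly polytope'' is equivalent to ``every $w$-point set in general linear position on $S^{d-1}$ admits a sign vector with the stated property''. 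Reading this equivalence with $d$ as the dimension of the sphere gives $w\geq\mu(d,k)\iff w\leq\nu(w-d-1,k)$, and with $d$ as the dimension of the point configuration gives $w\leq\nu(d,k)\iff w\geq\mu(w-d-1,k)$; together with the routine monotonicity of $\nu(\cdot,k)$ and $\mu(\cdot,k)$ in the number of points these yield exactly the two displayed relations.

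The step needing the most care — the real content of the lemma — is the claim that permissible projective transformations correspond on the Gale side to \emph{unrestricted} sign changes of the vectors. A single such transformation of $\mathbb{R}^{w-d-1}$ rescales $\bar x_i$ by the value of a linear functional on the homogenisation $(x_i,1)$, whose sign pattern is only a covector of the oriented matroid of $X$, not an arbitrary element of $\{1,-1\}^w$. One must allow the point at infinity to sweep past points of $X$ — that is, work with the full reorientation class of the oriented matroid — and observe that the target condition ``$0\in\conv$ of every $k$-codimensional subset of $\bar X_E$'' can hold only when $\bar X_E$ positively spans $\mathbb{R}^d$, hence is (up to sign and positive rescaling) the Gale transform of a genuine point configuration, i.e.\ the reorientation in question is acyclic and is actually realised in $\mathbb{R}^{w-d-1}$, so nothing outside the permissible projective orbit is invoked. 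Everything else — the Gale incidence dictionary, the identification of $\conv$ with $\relint\conv$ in general linear position, and the dimension and monotonicity bookkeeping — is standard.
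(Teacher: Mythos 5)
Your proposal is correct and follows essentially the same route as the paper: Gale duality, the dictionary between a permissible projective transformation and the sign changes $\epsilon_i=\sign(\langle c,x_i\rangle+\delta)$ of the Gale vectors, and Gr\"unbaum's face criterion $0\in\relint\conv(\overline{X}_E\setminus \overline{X}_E^{k})$. Your closing paragraph in fact makes explicit a point the paper passes over with ``the properties of the Gale transform'' --- namely that in the converse direction an arbitrary sign vector satisfying the neighbourliness condition forces $\overline{X}_E$ to positively span $\mathbb{R}^d$, so that $E$ really is realised as the sign pattern of an affine functional and hence by a permissible projective transformation.
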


\begin{proof} Let $X$ be a set of points in general position  in $\mathbb{R}^{d}$ such that $|X|=\nu \leq \nu(d,k).$  By hypothesis and the properties of Gale transforms, there is a nonsingular projective transformation, permissible for $X$, $P(x)=\frac{Ax+b}{\langle c,x \rangle + \delta}$, such that P(X) is the set of vertices of a $k$-neighbourly convex polytope. Then the Gale diagram of $X$, $\overline{X}$, is linearly equivalent to the set $\overline{X}_{E}=\{\epsilon_{1}\overline{x}_{1},\ldots, \epsilon_{\nu}\overline{x}_{\nu}\}$, where $\epsilon_{i}=\sign( \langle c,x_{i} \rangle + \delta)$ for all $i=1,\ldots \nu.$ It has been proven in \cite{Grunbaum} that for all $k$-membered subsets of $\overline{X}_{E}$, $ \overline{X}_{E}^{k}$,  $0 \in conv( \overline{X}_{E} \backslash \overline{X}_{E}^{k}).$ So $\nu \geq \mu(\nu-d-1,k).$

Conversely, let $\overline{X} \in \mathbb{R}^{d}$ such that $|\overline{X}|=\mu \geq \mu(d,k)$, is the Gale diagram of a set $X\subset \mathbb{R}^{\mu-d-1}.$ Then there is a sequence  $E=(\epsilon_{1},\ldots, \epsilon_{\mu}) \in \{1,-1\}^{\mu}$ such that $\overline{X_{E}}=\{\epsilon_{1}\overline{x_{1}},\ldots, \epsilon_{\mu}\overline{x_{\mu}}\}$ is the Gale diagram of a $k$-neighbourly polytope, where $\epsilon_{i} \in \{1, -1\}.$ Using the properties of the Gale transform \cite{Grunbaum}, there  are $c \in \mathbb{R}^{d}$ and $\delta \in \mathbb{R}$ such that $\epsilon_{i}=\langle c, x_{i}\rangle + \delta$ for all $i=1,\ldots,\mu$, and a linear transformation $A$ and a vector $b \in \mathbb{R}^{d}$ such that the projective transformation $P(x)=\frac{Ax+b}{\langle c,x \rangle + \delta}$ is regular and permissible for $X$, and such that $P(X)=X_{E}$, where $X_{E}$ is the Gale transform of $\overline{X_{E}}.$ Hence $\mu \leq \ \nu(\mu-d-1,k).$
\end{proof}

\begin{lemma} The following two problems are equivalent:
\begin{itemize}
\item[] Determine the smallest number $\lambda{(d, k)}$ such that for any set $X$ of $\lambda{(d,k)}$  points in $\mathbb{R}^d$ there exists a subdivision of X into two sets $A, B$ such that $ \conv{(A \backslash {\{x_1, x_2, \dots , x_k\}})} \cap \conv {(B \backslash {\{x_1, x_2, \dots , x_k\}})}  \neq \emptyset$, for all $ {\{x_1, x_2, \dots , x_k\}}  \subset X.$\\

\item[] Determine the smallest number $\mu{(d, k)}$ such that for any set $X$ of $\mu{(d,k)}$ points lying in linearly general position on $S^{d-1}$, it is possible to choose a sequence $E=(\epsilon_{1},\ldots, \epsilon_{\mu(d,k)}) \in \{1,-1\}^{\mu(d,k)}$ such that for every $k$-membered subset of $X_{E}$, $ X_{E}^{k}$,  $0 \in \conv ( X_{E} \backslash X_{E}^{k})$, where  $X_{E}=\{\epsilon_{1}x_{1},\ldots, \epsilon_{\mu(d,k)}x_{\mu(d,k)}\}.$ \\
\end{itemize}

So $\mu({d+1,k})=\lambda({d,k}).$
\end{lemma}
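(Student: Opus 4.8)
The plan is to pass between the affine configuration $X \subset \mathbb{R}^d$ of the first problem and a linear configuration on $S^{d} = S^{(d+1)-1}$ by homogenisation, and to show that a Radon‑type subdivision of $X$ corresponds \emph{exactly} to an admissible sign vector for the homogenised configuration. The numerical identity $\mu(d+1,k)=\lambda(d,k)$ then drops out of the equivalence of the two problems, configuration by configuration, with the cardinality preserved throughout.

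First I would fix $X=\{x_1,\dots,x_n\}\subset\mathbb{R}^d$ in general position and set $\hat x_i=(x_i,1)\in\mathbb{R}^{d+1}$. Since general position of $X$ means that every $\le d+1$ of the $x_i$ are affinely independent, the vectors $\hat x_i$ are in linearly general position in $\mathbb{R}^{d+1}$; rescaling each to unit length (which changes nothing about which convex hulls contain the origin) places them on $S^{d}$. Given a candidate subdivision $X=A\sqcup B$, put $\epsilon_i=+1$ if $x_i\in A$ and $\epsilon_i=-1$ if $x_i\in B$, and write $\hat X_E=\{\epsilon_i\hat x_i\}$. The heart of the argument is the elementary identity, valid for every $k$-subset $S\subset X$,
\[\conv(A\setminus S)\cap\conv(B\setminus S)\neq\emptyset \iff 0\in\conv\bigl(\hat X_E\setminus(\hat X_E)_S\bigr).\]
This is a direct computation with homogeneous coordinates: a common point $p=\sum_{A\setminus S}\alpha_i x_i=\sum_{B\setminus S}\beta_j x_j$ with $\alpha,\beta\ge 0$ and $\sum\alpha_i=\sum\beta_j=1$ rearranges — the extra coordinate being precisely what forces the two coefficient sums to agree — into a nontrivial nonnegative dependence $\sum_{i\notin S}\gamma_i(\epsilon_i\hat x_i)=0$, and conversely reading off the last coordinate of such a dependence recovers two convex combinations meeting at a common point. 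I would note that the degenerate cases (one of $A\setminus S$, $B\setminus S$ empty) are consistent on both sides, so the equivalence is unconditional, and that it uses the \emph{same} sign vector $E$ on both sides.

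Consequently $X$ admits a subdivision satisfying the $\lambda(d,k)$-condition for all $k$-subsets if and only if its homogenisation on $S^{d}$ admits a sign vector satisfying the $\mu(d+1,k)$-condition. To conclude I would match the two ``smallest $n$'' quantifiers. One inequality is immediate: every general‑position $X\subset\mathbb{R}^d$ homogenises to a linearly general configuration on $S^{d}$, so an $n$ that works for all sphere configurations works for all homogenised ones, giving $\lambda(d,k)\le\mu(d+1,k)$. For the reverse, given an arbitrary configuration $Y$ on $S^{d}$ in linearly general position, I would apply a generic rotation so that no point of $Y$ lies on the equator $\{x_{d+1}=0\}$ — the bad rotations form a measure‑zero set, and a rotation affects neither linear general position nor any statement of the form ``$0\in\conv(\cdot)$'' — then flip signs to bring $Y$ into the open upper hemisphere and project it down to a general‑position $X\subset\mathbb{R}^d$; running the equivalence backwards yields $\mu(d+1,k)\le\lambda(d,k)$.

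The only genuinely delicate point is this last hemisphere/genericity step: ensuring that the configuration‑wise equivalence can be invoked for \emph{every} configuration on the sphere, not only for those already sitting in a hemisphere. Everything else is bookkeeping. I would also record once and for all that absorbing a preliminary sign flip, and on the sphere side a rotation, into the choice of $E$ is harmless: swapping $A\leftrightarrow B$ is a symmetry of the $\lambda$-condition, and ``$0\in\conv(\cdot)$'' is invariant under invertible linear maps, so the same $E$ certifies $Y$ and $\rho(Y)$ simultaneously.
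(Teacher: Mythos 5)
Your proposal is correct and follows essentially the same route as the paper: both identify a point set in $\mathbb{R}^d$ with its radial lift to (an open hemisphere of) $S^{d}$, and both rest on the observation that a partition $A\sqcup B$ with $\conv(A\setminus S)\cap\conv(B\setminus S)\neq\emptyset$ corresponds exactly to a sign vector $E$ with $0\in\conv(X_E\setminus X_E^{k})$ via a central separating hyperplane; the paper phrases both inequalities contrapositively (a failing configuration on one side produces one on the other), while you argue directly through the coefficient identity, but the content is the same. The one point you elide is that the $\lambda(d,k)$ problem quantifies over \emph{all} point sets in $\mathbb{R}^d$, not only those in general position, whereas your dictionary starts from a generic affine configuration; so your ``immediate'' inequality $\lambda(d,k)\le\mu(d+1,k)$ as written only covers generic $X$. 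The paper closes this with a one-line perturbation argument (failure of $k$-divisibility is an open condition, since it amounts to finitely many strict separations of compact convex sets, so a non-$k$-divisible set can be perturbed into general position); you would need to add the same routine observation.
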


\begin{proof} Let $X$ be a set of $\mu < \mu(d+1,k)$ points lying in general linear position in ${S}^d$. Then for all sequences $E=(\epsilon_{1},\ldots, \epsilon_{\mu}) \in \{1,-1\}^{\mu}$, there is a $k$-membered set,  $X^{k}=\{{x}_{i_{1}}\ldots {x}_{i_{k}}\} \subset {X}$, such that $0 \not \in  \relint \conv ( X_{E} \backslash X_{E}^{k})$, where $X_{E}=\{\epsilon_{1}x_{1},\ldots, \epsilon_{\mu}x_{\mu}\}$ and $X_{E}^{k}=\{\epsilon_{i_{1}}x_{i_{1}},\ldots, \epsilon_{i_{k}}x_{i_{k}}\}$.

Therefore there is a hyperplane $H'$ that weakly separates the origin from $\conv ({X}_{E} \backslash X_{E}^{k}).$ However, as the points in ${X}$ are in general  linear position, there is a hyperplane $H$ through the origin such that \[{X}_{E} \backslash X_{E}^{k} \subset S^{\nu-d-2}\cap H^{+}.\]

Then, given any $E \in \{1,-1\}^{\mu}$, consider the partition of ${X}$ formed by the sets
\begin{gather*}A =\{{x}_{i}| \epsilon_{i} \in E \text{ is such that } \epsilon_{i}=+\}\\
 B =\{{x}_{i}| \epsilon_{i} \in E \text{ is such that } \epsilon_{i}=-\}.\end{gather*} 
 For each $E$, the set $X^{k}$ induces a hyperplane $H$ as above such that $H$ separates $\conv (A \backslash X^{k})$ from $\conv (B \backslash X^{k}).$ This implies that \[\lambda(d, k) \geq \mu(d+1, k).\]

Conversely,  if a set of points $X=\{x_{1},\ldots,x_{\lambda}\}$ lies in an open hemisphere of  $S^{d}$ and is not $k$-divisible, then there exists $\eta >0$ such that every set $X'=\{x_{1}',\ldots,x_{\lambda}'\}$ with $\lVert x_{i}-x_{i}'\rVert<\eta$ is not $k$-divisible and lies in the same hemisphere. Consequently, it can be supposed that $X$ is in general linear position.

Given any sequence $E =(\epsilon_{1},\ldots, \epsilon_{\lambda}) \in \{1,-1\}^{\lambda}$, with $\lambda < \lambda(d,k)$, consider the partition into two sets given by
\begin{gather*}{A} =\{{x}_{i}| \epsilon_{i} \in E \:\text{is such that} \:\epsilon_{i}=+\}\\
{B} =\{{x}_{i}| \epsilon_{i} \in E \:\text{is such that} \:\epsilon_{i}=-\}.\end{gather*}
By hypothesis there are points $X^{k}=\{x_{i_{1}},\ldots,x_{i_{k}}\} \in X$ such that
\[\conv ( A \backslash \{x_{i_{1}},\ldots,x_{i_{k}}\} ) \cap \conv ( B \backslash \{x_{i_{1}},\ldots,x_{i_{k}}\} ) = \emptyset.\] Thus there is a hyperplane $H$ through the origin that separates
\[
\conv ( A \backslash \{x_{i_{1}},\ldots,x_{i_{k}}\} ) \text{ from } \conv ( B \backslash \{x_{i_{1}},\ldots,x_{i_{k}}\} ).
\]

Hence ${A}_{E}\backslash \{\epsilon_{i_{1}}{x}_{1},\ldots,\epsilon_{i_{k}} {x}_{i_{k}}\}$ and ${B}_{E}\backslash \{ \epsilon_{i_{1}} {x}_{1},\ldots,\epsilon_{i_{k}} x_{i_{k}} \}$ are contained in the same open half space, with ${A}_{E}=\{\epsilon_{i} {x}_{i}|{x}_{i} \in {A} \}$ and $B_{E}=\{ \epsilon_{i} {x}_{i}|{x}_{i} \in B \}$, which proves that for all $E \in \{1,-1\}^{\lambda}$, there is a set $\{{x}_{i_{1}},\ldots {x}_{i_{k}}\} \subset {X}$ such that $0 \not \in \conv ({X}_{E} \backslash \{ {x}_{i_{1}},\ldots,{x}_{i_{k}}\}).$ Then  $\mu(d+1, k) \geq \lambda(d,k).$\end{proof}

From the two lemmas above, we have the final relationship between $\lambda$ and $\nu$;

\begin{corollary}
\begin{align*} \nu{(d,k)} &= \max_{w \in \mathbb{N}} { \{ w \geq \lambda{(w-d-2,k)} \} } ,\\
\lambda{(d,k)} &= \min_{w \in \mathbb{N}} { \{ w \leq \nu{(w-d-2,k)} \} } .\end{align*}
\end{corollary}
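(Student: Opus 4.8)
The plan is to obtain the Corollary purely by composing the two Lemmas already established, with no new geometry required. Lemma 2 asserts the identity $\mu(d+1,k)=\lambda(d,k)$; after reindexing this reads $\mu(e,k)=\lambda(e-1,k)$ for every admissible value of $e$. The strategy is simply to substitute this identity into each of the two relations supplied by Lemma 1 and then simplify the arithmetic appearing in the subscripts.

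First I would treat the formula for $\nu$. Lemma 1 gives $\nu(d,k)=\max_{w\in\mathbb{N}}\{\,w\ge\mu(w-d-1,k)\,\}$. Replacing $\mu(w-d-1,k)$ by $\lambda\big((w-d-1)-1,k\big)=\lambda(w-d-2,k)$ via Lemma 2 yields at once $\nu(d,k)=\max_{w\in\mathbb{N}}\{\,w\ge\lambda(w-d-2,k)\,\}$, which is the first displayed equation. Next I would treat the formula for $\lambda$. By Lemma 2 we have $\lambda(d,k)=\mu(d+1,k)$, and the second relation of Lemma 1, applied with $d$ replaced by $d+1$, gives $\mu(d+1,k)=\min_{w\in\mathbb{N}}\{\,w\le\nu(w-(d+1)-1,k)\,\}=\min_{w\in\mathbb{N}}\{\,w\le\nu(w-d-2,k)\,\}$. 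Chaining the two equalities produces the second displayed equation.

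The only points that require care — and what I would regard as the (minor) obstacle — are bookkeeping matters rather than anything substantive: one must check that the sets over which the $\max$ and $\min$ are taken are nonempty and that the extrema are attained, so that the stated identities are meaningful, and that the threshold value selected is unchanged when one reindexes the first argument. Nonemptiness and finiteness follow from the explicit bounds on $\nu(d,k)$ and $\lambda(d,k)$ proved in Theorems \ref{PROJECTIVE} and \ref{TPARTITION} (equivalently, from the classical finiteness of McMullen-type numbers), and the fact that ``$w$ points always admit the required structure'' is inherited by larger values of $w$ ensures the $\max$/$\min$ genuinely pick out a single threshold. Once these routine observations are in place, the Corollary is immediate from the substitution described above.
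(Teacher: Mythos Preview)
Your proposal is correct and is exactly the approach the paper takes: the Corollary is stated as an immediate consequence of the two preceding Lemmas, obtained by substituting $\mu(e,k)=\lambda(e-1,k)$ from Lemma~2 into the relations of Lemma~1. The paper does not spell out the substitution or the well-definedness checks you mention, so your write-up is in fact more detailed than the original.
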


\section{Proofs of theorems 1 and 2}

The proofs of the upper and lower bounds of Theorems 1 and \ref{TPARTITION} use several results on realizable oriented matroids. In the following subsection we briefly outline these results and refer the reader to \cite{OM}  and \cite{RA} for proofs and further background. We also introduce a few concepts that will be extensively used in the proof of the upper bound.

\subsection{Oriented Matroids}

The class of Lawrence oriented matroids of rank $r$ on a ground set of cardinality $n$, as defined by J.Lawrence \cite{OM} is comprised by unions of rank 1 oriented matroids on a totally ordered set $(E, <)$, $\mathcal{M}=\bigcup_{i=1}^{r} \mathcal{M}_{i}$. Each oriented matroid in the class can be represented by a matrix $ \mathcal{A} = (a_{i,j})$, with $ 1\leq i \leq r$ and  $ 1 \leq j \leq n$, whose entries are in the set ${\{1,-1}\}$. 

Also,  if  $\mathcal{A} = (a_{i,j})$ is as before, we denote $\rlawrence$ its corresponding oriented matroid. Each element of the ground set of $\rlawrence$ will be associated to a column of $\mathcal{A}$. Given $c \in E$ where $E$ is the ground set of $\rlawrence$, the matrix corresponding to the matroid reorientation over $c$, denoted $\reorientation$,  is obtained by multiplying by $-1$ the sign of all the entries in the column corresponding to the element $c$ of $\mathcal{A}$, denoted $\rmatrix$.

By definition, an interior element of an oriented matroid $\mathcal{M}$ is an element $c$ of the ground set of the matroid such that there is a circuit $X$ with $X^{+}=\{c\}.$  For an interior element $c$, the reorientation  $\reorientation$, $of \rlawrence$ is cyclic.

It is known that the matrix $\mathcal{A}$ encodes the chirotope of the matroid $\rlawrence$ in the following way:
\begin{displaymath}
\mathcal{X}(B)=  \prod_{i=1}^r a_{i, j_i},
\end{displaymath}
where $E=\{e_{{1}}, \ldots, e_{n}\}$ is the ground set and $B=\{e_{j_1} < \cdots < e_{j_r}\} \subset E$ is a basis of the matroid.

Thus, the signature of every circuit can be read from the chirotope as
\[\mathcal{X}(B)=\mathcal{X}(e_{j_{1}}, \ldots, e_{j_{r}})=\mathcal{X}_{j_{1}}(e_{j_{1}})\cdots \mathcal{X}_{j_{r}}(e_{j_{r}}),\]
where $\mathcal{X}$ is the chirotope of $\mathcal{M}= \cup_{i=1}^{r} \mathcal{M}_{i}$ and $\mathcal{X}_{i}$ is the chirotope of $\mathcal{M}_{i}.$

It is also known that the sign of the element $x_i$ in a circuit $C$ is  \[C(e_{j_{i}})= (-1)^i \cdot \mathcal{X}(e_{j_{1}},\ldots,e_{j_{i-1}},e_{j_{i+1}},\ldots,e_{j_{r}}),\] so that
\[C(e_{j_{i}})= (-1)^i \cdot \mathcal{X}_{j_{1}}(e_{j_{1}})\cdots \mathcal{X}_{j_{i-1}}(e_{j_{i-1}})\cdot\mathcal{X}_{j_{i+1}}(e_{j_{i+1}})\cdots \mathcal{X}_{j_{r}}(e_{j_{r}}).\]
In the matrix representation this means that if $C=\{e_{j_{1}}, \ldots, e_{j_{r}}\}$ is a circuit with $j_{i} \in \{1, \ldots, n\}$, then
\[C(e_{j_{i}})= (-1)^i \cdot a_{1,j_{1}}\cdots a_{i-1,j_{i-1}}\cdot a_{{i}, j_{i+1}}\cdots a_{{r-1}, j_{r}}.\]

Hence $C(e_{j_{i}})\cdot C(e_{j_{i+1}})= - a_{i,j_{i+1}} \cdot a_{i, j_{i}}$. So that $C(e_{j_{i}})= C(e_{j_{i+1}})$ if and only if $a_{i,j_{i+1}} =- a_{i, j_{i}}.$ 

Using all these properties of Lawrence oriented matroids , Ram\'{i}rez-Alfons\'{i}n introduced the following definitions for $\mathcal{A}$:

A \textbf{Plain Travel} (PT) in $\mathcal{A}$ is the following  subset of the entries of \(\mathcal{A}$:
\[PT=\{ [a_{1, 1}, a_{1, 2},\dots ,a_{1, j_{1}}] , [a_{2, j_{1}}, a_{2, j_{1}+1},\dots , a_{2, j_{2}}] , \dots , [a_{s, j_{s-1}}, a_{s, j_{s-1}+1},\dots , a_{s, j_{s}}] \} \]  with \( 2\leq j_{i-1} \leq j_{i}  \leq n \quad \forall \quad 1 \leq i \leq r ,\;\; 1\leq s \leq r$   and  $j_{s} = n$.

A \textbf{Top Travel} (TT) in $\mathcal{A}$ is a $PT$ with the following additional constraints:
\begin{enumerate}
\item$ a_{i, j_{i-1}} \times a_{i, j}= 1, \quad \forall \quad\  j_{i-1} \leq j \leq j_{i};$
\item$ a_{i, j_{i-1}} \times a_{i, j_{i}}= -1;$  and
\item either 
\begin{itemize}
\item $1\leq s < r$;   then  $j_{s} = n$ or 
\item $s=r$ and $ j_{s} \leq n.$
\end{itemize}
\end{enumerate}

A \textbf{Bottom Travel} (BT) in $\mathcal{A}$ is defined as a TT starting from the bottom left corner of the matrix; i.e.,
\begin{enumerate}
\item $ a_{i, j_{i+1}} \times a_{i, j}= 1, \quad \forall \quad\  j_{i} \leq j \leq j_{i+1};$
\item $ a_{i, j_{i+1}} \times a_{i, j_{i}}= -1;$  and 
\item either 
\begin{itemize}
\item $1 < s \leq r$;   then  $j_{s} = 1$ or 
\item $s=1$ and $ 1 \leq j_{s}.$
\end{itemize}
\end{enumerate}

Plain travels  can then be associated with circuits of the matroid thus, in order  to study cyclicity in the matroid, one only needs to study the behaviour of travels in $\mathcal{A}$. In ~\cite{RA}, J. Ramirez-Alfonsin proves the following propositions:

\begin{proposition} \label{LLOM} Let  $ \mathcal{A} = (a_{i,j})$ with $1\leq i \leq r$, $1 \leq j \leq n$, be a matrix with entries from  ${\{1,-1}\}$, $\mathcal{M}_{\mathcal{A}}$ its corresponding Lawrence oriented matroid, and TT and BT the top and bottom travels constructed on $\mathcal{A}$. Then the following conditions are equivalent:
\begin{enumerate}
\item$\mathcal{M}_{\mathcal{A}}$ is \emph{cyclic};
\item TT ends at $ a_{r,s}$ for some $ 1 \leq s < n$; and
\item BT ends at $ a_{1, s'}$ for some $ 1< s' \leq n$.
\end{enumerate}
\end{proposition}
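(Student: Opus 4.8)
The plan is to prove $(1)\Leftrightarrow(2)$ in full and then to deduce $(1)\Leftrightarrow(3)$ from it by reflecting the matrix; the equivalence $(2)\Leftrightarrow(3)$ then follows by transitivity. The whole argument is organised around the correspondence, recalled above, between plain travels of $\mathcal{A}$ and circuits of $\rlawrence$: a plain travel is encoded by the columns $1=j_{0}\le j_{1}\le\cdots$ at which it changes rows, these columns carry the support of a circuit $C$, and the identity $C(e_{j_{i}})C(e_{j_{i+1}})=-a_{i,j_{i}}a_{i,j_{i+1}}$ shows that all coordinates of $C$ agree in sign --- i.e. $C$ is positive and $\rlawrence$ is cyclic --- exactly when each row segment of the travel is constant and terminates with a sign opposite to its first entry. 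These are precisely conditions (1) and (2) in the definition of a top travel, so the real content is to tie the existence of a positive circuit to the termination behaviour of the \emph{greedy} such travel, $\TT$.

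The direction $(2)\Rightarrow(1)$ is the constructive one. If $\TT$ reaches $a_{r,s}$ with $s<n$, then it has run through all $r$ rows while staying strictly to the left of column $n$; reading off the circuit $C$ that it determines and applying the sign identity above shows that $C$ (or $-C$) is a positive circuit, so $\rlawrence$ is cyclic. The one point that needs care is that the columns selected by $\TT$ support a genuine circuit rather than merely a dependent set, and this is built into the greedy rule that each segment stops at the \emph{first} sign change, so that no proper subset of the selected columns already carries a vector.

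The work lies in $(1)\Rightarrow(2)$, which I would prove in contrapositive form. If $\TT$ fails to end at any $a_{r,s}$ with $s<n$, then it is forced all the way into column $n$, terminating at some $a_{t,n}$ --- either because it never meets a sign change in some row $t<r$, or because it uses up all $r$ rows, finding a sign change only at column $n$ --- and the claim is that then $\rlawrence$ is acyclic. The idea is to extract a certificate of acyclicity from the failed travel: the reference signs $a_{i,j_{i-1}}$ accumulated as $\TT$ descends, together with the record of the columns in which $\TT$ does not change sign, single out a sign vector on the ground set that one then verifies to be a covector of $\rlawrence$, and the very fact that $\TT$ was forced into column $n$ is what makes \emph{every} entry of this covector equal to $+$; the existence of such an everywhere-positive covector precludes, by orthogonality of circuits and covectors, any positive circuit, so $\rlawrence$ is acyclic. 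Turning this into a rigorous argument --- pinning down the covector unambiguously and checking its positivity column by column, leaning on the properties of $\TT$ already established --- is the principal obstacle, and it is exactly here that the leftmost sign-change definition of $\TT$ is indispensable.

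Finally, for $(1)\Leftrightarrow(3)$ I would pass to the matrix $\mathcal{A}^{\ast}$ obtained from $\mathcal{A}$ by reversing the orders of both the rows and the columns, $a^{\ast}_{i,j}=a_{\,r+1-i,\;n+1-j}$. A short computation with the chirotope rule $\mathcal{X}(B)=\prod_{i}a_{i,j_{i}}$ shows that $\mathcal{M}_{\mathcal{A}^{\ast}}$ is $\rlawrence$ with the ground set relabelled by $e_{j}\leftrightarrow e_{n+1-j}$, so the two oriented matroids share their circuits up to this relabelling and in particular $\mathcal{M}_{\mathcal{A}^{\ast}}$ is cyclic if and only if $\rlawrence$ is. Under the same reflection the bottom travel of $\mathcal{A}$ becomes the top travel of $\mathcal{A}^{\ast}$, and "$\TT$ of $\mathcal{A}^{\ast}$ ends at $a^{\ast}_{r,s}$ with $s<n$" translates word for word into "$\mathrm{BT}$ of $\mathcal{A}$ ends at $a_{1,s'}$ with $1<s'\le n$". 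Applying the equivalence $(1)\Leftrightarrow(2)$ already proved, now to $\mathcal{A}^{\ast}$, therefore gives $(1)\Leftrightarrow(3)$ and completes the argument.
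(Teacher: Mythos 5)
This proposition is not proved in the paper at all: it is quoted verbatim from Ram\'{i}rez-Alfons\'{i}n \cite{RA} ("In \cite{RA}, J. Ramirez-Alfonsin proves the following propositions"), so there is no in-paper argument to compare yours against. Judged on its own terms, your outline gets the symmetry reduction and the easy implication right, but it does not prove the hard implication. The reduction of $(3)$ to $(2)$ via $a^{\ast}_{i,j}=a_{r+1-i,\,n+1-j}$ is sound, since the chirotope rule $\mathcal{X}(B)=\prod_i a_{i,j_i}$ shows the reversed matrix represents the same oriented matroid up to relabelling and the reversal exchanges the two travels. The implication $(2)\Rightarrow(1)$ also works as you describe: when $\TT$ reaches row $r$ at a column $s<n$ it has selected $r+1$ distinct columns, and condition (2) in the definition of a top travel, fed into the identity $C(e_{j_i})C(e_{j_{i+1}})=-a_{i,j_{i+1}}a_{i,j_i}$, makes all coordinates of the corresponding circuit agree in sign. (Your worry that the selected columns might support "merely a dependent set" is vacuous: the matroid is uniform of rank $r$, so every $(r+1)$-subset is a circuit; the greedy rule is not what guarantees this.)

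The genuine gap is $(1)\Rightarrow(2)$, which is the entire content of the proposition: that the one greedy travel $\TT$ detects a positive circuit whenever any exists. You propose to extract an all-positive covector from a travel that gets forced into column $n$ and to conclude acyclicity by circuit--covector orthogonality, but you never define that covector, never verify it is a covector of $\mathcal{M}_{\mathcal{A}}$, and never check its positivity --- and you say yourself that doing so "is the principal obstacle." A sketch that names its own missing step is not a proof. For what it is worth, a more elementary route avoids covectors entirely: argue by induction on the row index that if some columns $j_1<\cdots<j_{r+1}$ carry a positive circuit (equivalently $a_{i,j_{i+1}}=-a_{i,j_i}$ for all $i$), then $\TT$ leaves row $i$ no later than column $j_{i+1}$, because a sign change is forced to occur in row $i$ at or before $j_{i+1}$ from wherever $\TT$ entered that row; hence $\TT$ reaches row $r$ and terminates there, giving the contrapositive-free version of $(1)\Rightarrow(2)$. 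Until some such argument is actually carried out (including the boundary case $s=n$), the proposal establishes only one of the three implications.
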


\begin{proposition} \label{PT} Let  $ \mathcal{A} = (a_{i,j})$ with $  1\leq i \leq r$,  $ 1 \leq j \leq n$, be a matrix with entries from  ${\{1,-1}\}$ and  $\mathcal{M}_{\mathcal{A}}$ its corresponding Lawrence oriented matroid. Then there is a bijection between the set of all plain travels of $\mathcal{A} $ and the set of all acyclic reorientations of $\mathcal{M}_{\mathcal{A}}$.
\end{proposition}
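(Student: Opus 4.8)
The plan is to construct an explicit map $\Phi$ from plain travels of $\mathcal{A}$ to reorientations of $\mathcal{M}_{\mathcal{A}}$, to identify its image with the acyclic reorientations via Proposition~\ref{LLOM}, and to check injectivity. A plain travel $PT$ amounts to a monotone staircase $\gamma$ in the $r\times n$ grid running from the entry $a_{1,1}$ rightwards and downwards to column $n$ (this is the requirement $j_s=n$), occupying some initial block of rows $1,\dots,s$. To define $\Phi(PT)$ I would reorient the columns of $\mathcal{A}$ by sweeping left to right: leave column $1$ untouched and, within each horizontal piece of $\gamma$ (in row $i$, from column $j_{i-1}$ to column $j_i$), choose the reorientations of the columns $j_{i-1}+1,\dots,j_i$ so that in the reoriented matrix $\mathcal{A}^{PT}$ the entries of row $i$ along that piece obey the sign pattern of a single step of a Top Travel — constant from $j_{i-1}$ up to the column just before the corner and then switching sign at the corner column $j_i$ — with the lone convention that when $s=r$ the terminal piece is kept constant all the way to column $n$. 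Each reorientation sign is forced exactly once (consecutive pieces of $\gamma$ share only their corner column, whose sign is fixed inside the earlier piece and then read as the starting sign of the next), so $\mathcal{A}^{PT}$ is well defined; I set $\Phi(PT):=\mathcal{M}_{\mathcal{A}^{PT}}$, noting that reorienting \emph{all} the columns returns the same oriented matroid, so the normalization at column $1$ is harmless.

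The first point to verify is that $\Phi(PT)$ is acyclic. By construction the Top Travel of $\mathcal{A}^{PT}$ is exactly $\gamma$: started at $a_{1,1}$ the first sign change in row $1$ occurs precisely at column $j_1$, from there the first sign change in row $2$ occurs at $j_2$, and so on, and the travel reaches column $n$ without getting stuck in the bottom row (if $s<r$ it arrives at column $n$ at a corner; if $s=r$ the terminal piece is constant through column $n$). Proposition~\ref{LLOM} then yields that $\Phi(PT)$ is not cyclic, i.e.\ acyclic.

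Conversely, given any reorientation of $\mathcal{M}_{\mathcal{A}}$ that is acyclic, represent it by a matrix $\mathcal{A}'$; by Proposition~\ref{LLOM} the Top Travel of $\mathcal{A}'$ reaches column $n$, so its underlying staircase $\gamma'$ is the staircase of a unique plain travel $PT'$ of $\mathcal{A}$. Feeding $PT'$ to $\Phi$ prescribes along $\gamma'$ exactly the sign pattern that, by the definition of Top Travel, $\mathcal{A}'$ already carries along $\gamma'$; since $\gamma'$ meets every column, this pins down the reorientation of every column, so $\Phi(PT')$ is the given reorientation (up to the column-$1$ normalization, which is immaterial). Injectivity is automatic: the oriented matroid $\Phi(PT)$ determines its own Top Travel, hence the common staircase, and a plain travel of $\mathcal{A}$ is nothing but its staircase. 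Hence $\Phi$ is the required bijection.

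The crux, and the place I expect to spend real effort, is the assertion that the Top Travel of $\mathcal{A}^{PT}$ is exactly $\gamma$: one must check that the greedy Top Travel procedure run on $\mathcal{A}^{PT}$ neither shortcuts nor overshoots $\gamma$, which forces a case-split treatment of the terminal piece ($s<r$ versus $s=r$) and of degenerate corners $j_{i-1}=j_i$ where the path drops through a row at a single column; equivalently, it forces one to pin down exactly which corner-sequences are admissible as plain travels (in particular whether a corner at column $1$ is allowed) so that plain travels of $\mathcal{A}$ and acyclic reorientations match in number as well as structure. Once this path/sign dictionary is nailed down, Proposition~\ref{LLOM} supplies everything else.
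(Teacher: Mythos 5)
First, a point of reference: the paper does not prove Proposition~\ref{PT} at all --- it is quoted verbatim from Ram\'{i}rez-Alfons\'{i}n \cite{RA} --- so there is no in-paper argument to compare yours against; your proposal has to stand on its own. Your overall strategy (force the given staircase to be the Top Travel of a suitably reoriented matrix, invoke Proposition~\ref{LLOM} for acyclicity, and invert by reading off the Top Travel of an acyclic reorientation) is the natural and, I believe, the intended one. The normalization remark (reorienting all columns preserves the oriented matroid, so fixing column $1$ is harmless) is also correctly placed: without some such identification the counts cannot match.

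However, the step you yourself flag as ``the crux'' is a genuine gap, and with the definitions as written in this paper it is not merely a technicality --- it is false. A plain travel here allows weakly increasing corners $j_{i-1}\le j_i$, but a piece with $j_{i-1}=j_i$ consists of the single entry $a_{i,j_i}$, for which the Top Travel flip condition $a_{i,j_{i-1}}\times a_{i,j_i}=-1$ is unsatisfiable; such a staircase can never be the Top Travel of any reorientation, and at such a corner your sweep forces no new column (so $\Phi$ is not even well defined there). The mismatch is visible in the counts: the number of weakly increasing staircases is $\binom{n+r-2}{r-1}$, whereas the number of acyclic reorientations (up to the complementation $S\leftrightarrow E\setminus S$) is $\sum_{k=0}^{r-1}\binom{n-1}{k}$. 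For $r=3$, $n=4$ and all $a_{i,j}=+1$ the unique circuit has signature $(-,+,-,+)$, so exactly two reorientation classes are cyclic, leaving $7$ acyclic classes against $10$ plain travels --- no bijection is possible. The repair is to pin down the path/sign dictionary you defer: a plain travel must be taken to have \emph{strictly} increasing drop columns $2\le j_1<\dots<j_k\le n$ with $k\le r-1$, the travel then continuing to column $n$ in row $k+1$; this gives exactly $\sum_{k=0}^{r-1}\binom{n-1}{k}$ travels, your sweep forces each column's sign exactly once, and both directions of your argument then go through (note also that Proposition~\ref{LLOM} as transcribed has a boundary ambiguity when the flip in row $r$ occurs at column $n$, which you implicitly rely on when declaring the terminal piece ``constant''). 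So: right architecture, but the decisive combinatorial identification of admissible travels --- precisely the part you postponed --- is where the proof currently fails.
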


In order to show the bound in equation  \eqref{upperbound}, stated in the introduction; Ram\'{i}rez-Alfons\'{i}n \cite{RA} constructs a family of Lawrence oriented matroids $\mathcal{A}$ of rank $r$ on a ground set $E$ with cardinality $n$. For this family of matroids it is always sufficient to reorient one of the elements in order to make them cyclic. That is, after just one column reorientation in the matrix $\mathcal{A}$, which represents the matroid, either TT ends at the last row or BT ends at the first row.
 
Ram\'{i}rez-Alfons\'{i}n's  construction of the desired families of matrices consists of restricting the patterns of signs formed by the matrices' elements. A visual technique for defining how patterns are formed in the grid of signs is constructing the chessboard of the matrix.

The \textbf{chessboard} of the matrix  $\mathcal{A}$ is a black and white board of size $(r-1)*(n-1)$, in which the square $s(i,j)$ has its upper left hand corner at the intersection of row $i$ and column $j$. A square $s(i,j)$, with $1 \leq i \leq r-1$ and $ 1 \leq j \leq n-1$, will be \emph{said to be} black if the product of the entries $a_{i,j}, a_{i,j+1}, a_{i+1,j}, a_{i+1,j+1}$ is $-1$, and white otherwise. 

It is easy to check that chessboards are invariant under reorientations of $\mathcal{A}$, hence they provide a natural framework for studying the information encoded in $\mathcal{A}$, because despite the many combinations of patterns of signs possible in a matrix, the analysis can be reduced to types of chessboards.

Chessboards have the following property: if there is one black square between a top travel TT and  a bottom travel BT, they follow symmetrically opposite paths through the entries of the matrix. In other words, if TT makes a single horizontal movement from $a_{i,j}$ to $a_{i,j+1}$ and continues its movement forward in the same row, then BT goes from  $a_{i+h,j+1}$ to $a_{i+h,j}$ and moves vertically to  $a_{i+h-1, j}$ (with $h\geq{1}$), and the other way around.

For the proof that follows we will use subfamilies of Lawrence oriented matroids whose matrix representation have a chessboard whose only black tiles are along its diagonal.  In such a chessboard we can define UD and LD as the following sets of elements of $\mathcal{A}:$
\begin{align*}
UD&=\left\{ a_{i,j}\;|\; s(i,j)\text{ or }s(i,j-1)\text{ is black}\right\}\\
LD&=\left\{ a_{i,j}\;|\; s(i-1,j-1)\text{ or }s(i-1,j)\text{ is black}\right\}.
\end{align*}

That is, UD consists of all the elements touching the diagonal of black blocks from above, and LD are the elements touching the diagonal from below.

Since we use matroids to prove Theorem \ref{TPARTITION}, a translation of geometrical neighbourliness into matroid neighbourliness is needed.

Recall that a d-polytope is $k$-neighbourly if given $k\leq{\lfloor{\frac{d}{2}}\rfloor}$ fixed , every subset of at most $k$ vertices of the vertex set of the polytope is contained in the vertex set of a facet of the polytope. Also recall that a subset $F$ of the ground set of a uniform matroid polytope $\mathcal{M}$ is a face of the matroid if and only if for all circuits $C$ of $\mathcal{M}$, $C^{+} \not \subset F$.

These necessarily imply that  a matroid polytope is \textbf{\boldmath{$k$}-neighbourly} iff $k \leq |C^+|$ and  $k \leq |C^-|$, $\forall$ $C \in \mathcal{C}$, where $\mathcal{C}$ is the set of circuits of the matroid $\mathcal{M}$, $C^{+}$ and $C^{-}$ are the positive and the negative elements of the circuit $C$, respectively.

In particular, matroid polytopes are acyclic; thus a matroid polytope has  \emph{an acyclic reorientation of k or fewer ``interior'' points} iff there is at least one $C \in \mathcal{C}$ such that $ |C^+| \leq k$ or $ |C^-| \leq k$.


\subsection{Proof of the Upper Bound}

In order to find an upper bound for Theorem \ref{PROJECTIVE}, it is therefore sufficient to find families of realisable matroids such that any acyclic reorientation of them contains at least one $C \in \mathcal{C}$ such that $ |C^+| \leq k$ (or $ |C^-| \leq k$). 

As before, considering the matrix representation $\mathcal{A}=(a_{i,j})$ of each element in the class of Lawrence oriented matroids of unions of $r$ uniform rank one oriented matroids over a ground set $E$, with cardinality $n$ and using Propositions \ref{LLOM} and \ref{PT}, we only need to find a family of acyclic matrices such that there is always a set $S$ of indices of columns of the matrix with $|S|\leq k$ such that the reorientation $\rmatrices$ is cyclic. This will be achieved by considering a class of acyclic matrices with a specific chessboard, and proving that a suitable set $S$ can always be found. The rest follows as all Lawrence oriented matroids are geometrically realizable.

Let $k=2$, we define the set of matrices with the same chessboard, $CB(r,n, 2)= \{ \mathcal{A} = (a_{i,j})|\: 1\leq i \leq r, \: 1 \leq j \leq n=2(r-1)+ 1,\: \text{and} \: a_{i,j} \in \{1,-1\} \}$, such that:

\renewcommand{\labelitemii}{$\circ$}
\begin{itemize}
\item$s(i,j)=a_{i,j}\times a_{i,j+1}\times a_{i+1,j}\times a_{i+1,j+1}=-1$ (i.e. $s(i,j)$ is black) if 
\begin{itemize}
\item$j=2(i-1)+1 $, or 
\item
$j+1=2(i)+ 1$;
\end{itemize}
\item$s(i,j)=1$ (i.e. $s(i,j)$ is white) otherwise.

\end{itemize}

This chessboard consists of  black steps of length two along the diagonal, as seen in Figure~\ref{fig: 5x9}.

\begin{figure}[t]
\begin{center}
\includegraphics[width=7cm]{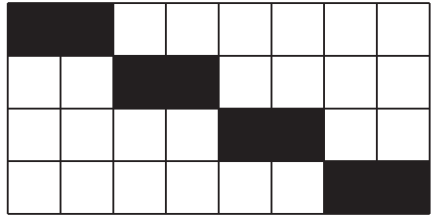}
\caption{Chessboard for a $5\times{9}$ matrix}
\label{fig: 5x9}
\end{center}
\end{figure}

\begin{lemma} \label{LBASE} A matrix  $\mathcal{A} \in CB(r,n, 2)$ has a cyclic reorientation $\rmatrices$, where $|S|\leq 2$.
\end{lemma}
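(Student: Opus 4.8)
The plan is to exploit Proposition~\ref{LLOM}: to show $\mathcal{M}_{\mathcal{A}}$ becomes cyclic after reorienting a set $S$ of columns with $|S|\le 2$, it suffices to exhibit a reorientation whose Top Travel ends in the bottom row $a_{r,s}$ with $s<n$ (equivalently whose Bottom Travel ends in the top row). Since chessboards are invariant under reorientation, the black-step structure of $CB(r,n,2)$ is fixed, and the whole argument is about how TT must traverse a chessboard whose black squares form length-two steps down the diagonal.

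First I would analyze the Top Travel of an arbitrary $\mathcal{A}\in CB(r,n,2)$ with no reorientation. Starting from $a_{1,1}$, TT moves right along row $1$ until its sign-pattern forces (by conditions (1)--(2) in the definition of TT) a drop to row $2$; the location of that drop is governed entirely by the colours of the squares $s(1,j)$, whose only black entries sit at $j=1$ and $j=2$ (the first step). The key structural fact I want to extract is that, because each diagonal step has width exactly two, TT either descends steadily ``along'' the black diagonal — ending at some $a_{r,s}$ with $s<n$, in which case we are already done with $|S|=0$ — or at some point it ``overshoots'' a step and runs horizontally past the black diagonal, eventually exiting the right edge in a row $i<r$, i.e.\ reaching column $n$ before reaching row $r$. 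This dichotomy is where the two specific columns to reorient will be identified. I would formalize it by tracking, row by row, the column at which TT leaves row $i$, and showing this index is confined to a narrow band around $2(i-1)+1$ determined by the step pattern.

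Next, in the ``overshoot'' case, I would pin down the precise obstruction: TT fails to descend because at exactly one (or two) columns the ambient signs of $\mathcal{A}$ prevent the forced drop that the black squares would otherwise dictate. Using the relations $C(e_{j_i})C(e_{j_{i+1}})=-a_{i,j_{i+1}}a_{i,j_i}$ and the chessboard relation $s(i,j)=a_{i,j}a_{i,j+1}a_{i+1,j}a_{i+1,j+1}$, I would show that flipping the single column where the overshoot begins (and, if a second step is also overshot, the column where the next one begins) changes the relevant products without altering the chessboard, so that after this reorientation $_{S}\mathcal{A}$ the Top Travel is forced down at the black step instead of running past it. A short induction on the number of rows, or on the index of the first overshot step, then shows that at most two column flips suffice to convert any ``escaping'' TT into one that terminates at $a_{r,s}$ with $s<n$; by Proposition~\ref{LLOM} this makes $_{S}\mathcal{A}$ cyclic, and by Proposition~\ref{PT} (and realizability of Lawrence oriented matroids) the lemma follows.

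The main obstacle I anticipate is the bookkeeping in the overshoot case: making rigorous the claim that a single badly-signed column is responsible for TT skipping past a given length-two black step, and that reorienting it genuinely repairs the travel without creating a new escape further down. This is delicate because a column touched by the travel in one row also appears in the squares of the adjacent row, so flipping it affects the travel's behaviour at the next step too; I would handle this by using the symmetric-opposite-paths property of TT and BT across a single black square (quoted in the excerpt) to argue that the repair propagates correctly down the diagonal, and by choosing $S$ to consist of the ``first'' offending column plus at most one more, never needing a third because the diagonal steps have width exactly two. Verifying the base case $r$ small (and matching $n=2(r-1)+1$) by direct inspection of the $5\times 9$ example in Figure~\ref{fig: 5x9} would anchor the induction.
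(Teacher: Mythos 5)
Your overall skeleton (induction on $r$, base case by direct inspection, Proposition~\ref{LLOM} as the cyclicity criterion) matches the paper's, but the step that carries all the weight --- the claim that an ``overshooting'' Top Travel can always be repaired by flipping the column where the first overshoot begins plus at most one more --- is not established, and I do not see how to establish it by the route you indicate. Flipping a column changes where TT leaves row $1$ (say), but whether the new travel $\mathrm{TT}'$ then descends at the \emph{next} black step depends on the signs of row $2$ relative to the entry at which $\mathrm{TT}'$ arrives there; the chessboard only constrains products over $2\times 2$ blocks, so nothing prevents $\mathrm{TT}'$ from overshooting step $2$, then step $3$, and so on, each time apparently requiring a fresh flip. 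The tool you invoke to control this propagation --- the symmetrically-opposite-paths property --- compares TT with BT \emph{of the same matrix}; it says nothing about TT before versus after a reorientation, so it cannot show that the repair ``propagates correctly down the diagonal.'' Your own closing paragraph flags exactly this as the delicate point, but the proposed resolution does not close it.

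The paper avoids the local-repair problem entirely. In the inductive step it splits into: (i) TT (or BT) meets $\UD\cap\LD$ at some $a_{i,j}$ with $j=2(i-1)+1$ and $2\le i$ (resp.\ $i\le r-1$), in which case the configuration from that corner onward is a smaller instance of the same chessboard class and the induction hypothesis applies directly; and (ii) the extreme case in which TT stays strictly above $\UD$ and BT strictly below $\LD$ throughout. Case (ii) is then handled not by repairing TT but by a global count: since TT ends at $a_{i',n}$ with $i'<r$ and makes only $i'-1$ vertical moves after column $3$, the opposite-paths property forces BT to make $2r-i'-3\ge r-3$ vertical moves \emph{before} column $3$, i.e.\ BT is already in row $2$ at column $3$, and a single column flip then brings BT to row $1$. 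If you want to salvage your argument, replace the ``flip at the first overshoot'' step with this dichotomy (reduce to a smaller chessboard, or count BT's early vertical moves); as written, the bound $|S|\le 2$ does not follow.
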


\begin{proof} The proof will follow by induction on r. The first interesting case is when $r=3$; then $n=5.$ There are five different cases where $\mathcal{A}$ is acyclic, which are characterized by their BT and TT, shown in Figure~\ref{fig:fivecases}. In these five cases, $\mathcal{A}$ always has a cyclic reorientation, where the reoriented set has cardinality less or equal to $2$. Working from the top left hand corner in clockwise order in the figure, the columns that can be reoriented to make the chessboards cyclic are given in the following table:

\begin{center}
\begin{tabular}[pos]{cc}
chessboard & columns to be reoriented\\
1st & 4th\\
2nd & 3rd\\
3rd & 2nd\\
4th & 1st and 4th\\
5th & 2nd.\\
\end{tabular}
\end{center}

Suppose that for all $r < r^{*}$, the $r \times 2(r-1)+1$ matrix $\mathcal{A}$  has a cyclic reorientation of fewer than 2 elements.

Let $r=r^{*}$ and assume that TT last intersects $UD\cap LD$ in $a_{i,j}$ with $j=2(i-1)+1$ and $i<r.$ If $2 \leq i$, the lemma holds by the induction hypothesis. Equally, suppose BT last intersects $UD \cap LD$ (from right to left) at an element $a_{i',j'}$ with $j'=2(i'-1)+1$. If $ i' \leq r-1$, again the lemma holds.

Then TT has to go through elements $\{a_{1,1}, a_{1,2}, a_{1,3}, a_{1,4}\}$ and it always travels above UD, BT always travels below LD, and $\mathcal{A}$ is acyclic. TT finishes at an element $a_{i', n}$. These observations imply that if TT makes $2(r-1)+1-3$ horizontal movements and $i'-1$ vertical movements in order to reach column $2(r-1)+1$ from column $3$, as BT always passes strictly below UD, it always has the opposite behaviour to TT. Hence, BT has to make precisely $2r-i'-3$ vertical movements before column $3$. But $i' < r$, so $2r-i'-3\geq {r-3}$. That is, at column $3$, BT is already in row $2$, and by reorienting just one column the result follows. \end{proof}

\begin{figure}[t]
\begin{center}
\includegraphics[width=12cm]{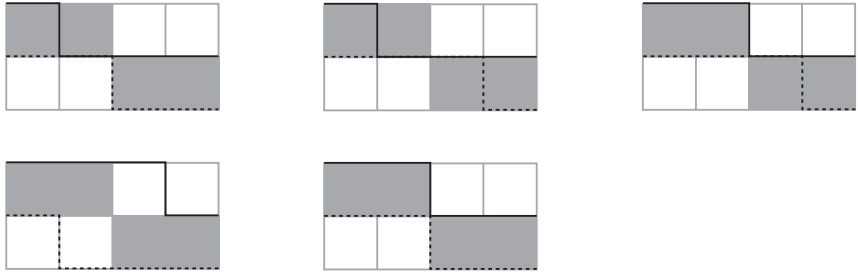}
\caption{Five cases where $\mathcal{A}$ is acyclic}
\label{fig:fivecases}
\end{center}
\end{figure}


When $k \geq 3$, the chessboard which is suitable for proving the lemma equivalent to Lemma \ref{LBASE} is constructed in the following manner.

 
Let $CB(r,n, k)= \{ \mathcal{A} = (a_{i,j})|\: 1\leq i \leq r, \: 1 \leq j \leq n=2(r-1)-(k-2) + 1,\: \text{and} \: a_{i,j} \in \{1,-1\} \}$ such that; 
\begin{itemize}

\item$s(i,j)=a_{i,j}\times a_{i,j+1}\times a_{i+1,j}\times a_{i+1,j+1}=-1$ (i.e. $s(i,j)$ is black) if 

\begin{itemize}

\item $j=2(i)- \lceil{\frac{(i-1)+l}{s}}\rceil$, or 
\item $j=2(i)- \lceil{\frac{(i-1)+l}{s}}\rceil+ 1$ and $i+s-l \not\equiv{{0}\mod{s}};$ 
\end{itemize}

\item$s(i,j)=1$ (i.e. $s(i,j)$ is white) otherwise,

\end{itemize}
where $s= \lceil {\frac{r-1}{k-2}} \rceil$, $3 \leq k < \frac{r-1}{2}$ and $ 1 \leq l \leq s$ are fixed.

This chessboard has black diagonals made up of single black blocks evenly distributed among double blocks. Figure~\ref{fig: varioscasos} illustrates this chessboard for the cases where $r=8$, $k=3$ and $l=1,4.$

\begin{lemma}\label{GENERAL} A matrix  $\mathcal{A} \in CB(r,n,k)$ has a cyclic reorientation $\rmatrices$ with $|S| \leq k$ for all $3\leq k \leq \lfloor \frac{r}{2} \rfloor.$
\end{lemma}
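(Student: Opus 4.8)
The strategy is to generalise the inductive argument of Lemma~\ref{LBASE} from the ``double-block'' chessboard ($k=2$) to the chessboard $CB(r,n,k)$, whose black diagonal now interleaves $s-1$ double blocks with a single block in each period of $s=\lceil\frac{r-1}{k-2}\rceil$ rows. The key structural fact we exploit is the one recorded in the excerpt: when a single black square sits between TT and BT these two travels are forced into symmetrically opposite paths, so that a horizontal move of TT in a given row is matched by a vertical move of BT in the mirrored row, and conversely. Thus TT and BT together ``spend'' a fixed budget of horizontal and vertical steps dictated purely by the shape of the diagonal, and the number of columns $n=2(r-1)-(k-2)+1$ has been chosen precisely so that this budget is tight: an acyclic matrix leaves just enough slack that reorienting a small set $S$ of columns pushes TT down to row $r$ (equivalently BT up to row $1$), making $\rmatrices$ cyclic.

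First I would set up the base case. Since $CB(r,n,k)$ is only defined for $3\le k<\frac{r-1}{2}$, the smallest relevant $r$ for a given $k$ is around $r=2k+1$, where $s$ is small and the chessboard degenerates to a short pattern; one checks by hand (as in the five-case analysis of Figure~\ref{fig:fivecases}) that in every acyclic configuration a reorientation of at most $k$ columns suffices. Second, for the inductive step with $r=r^\ast$, I would locate the last element $a_{i,j}$ at which TT meets $UD\cap LD$ and the last element $a_{i',j'}$ (scanning right to left) at which BT meets $UD\cap LD$. If either of these lies strictly inside the matrix — $i\ge 2$ for TT, or $i'\le r-1$ for BT — then the portion of $\mathcal A$ above (resp.\ below) that element is, after deleting the already-traversed rows and columns, again a matrix in $CB(r',n',k')$ with $r'<r^\ast$ (here the number of double/single blocks removed matches the decrease in $s$), and the induction hypothesis finishes the argument. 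Third, in the remaining case TT must run along the whole top band of the chessboard through $a_{1,1},\dots,a_{1,m}$ (where $m$ is the length of the first black step) before its first vertical move, and symmetrically BT runs along the bottom band; I would then count. TT needs $n-m$ horizontal moves and $i'-1$ vertical moves to reach column $n$, and since BT mirrors TT it must perform the complementary vertical moves before reaching the first band. The inequality $i'<r$ combined with $n=2(r-1)-(k-2)+1$ forces BT to already be within $k-1$ rows of the top before the critical column, whence reorienting the at most $k-1$ columns straddling the single black blocks in the first period (together with the one column handled in the base case) yields a cyclic reorientation with $|S|\le k$.

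The main obstacle is the bookkeeping in the third step: unlike the $k=2$ case, the single black blocks are not adjacent, so the ``opposite behaviour'' of TT and BT must be tracked period by period, and one must verify that the columns one chooses to reorient genuinely collapse every relevant black step rather than merely shifting the travels sideways. Concretely, I expect the delicate point to be checking that the quantity $2(r-1)-(k-2)+1$ is exactly the threshold — i.e.\ that with this $n$ an acyclic $\mathcal A\in CB(r,n,k)$ always has BT in row $\le k-1$ (or TT in row $\ge r-k+2$) at the decisive column, with the ceiling function $\lceil\frac{(i-1)+l}{s}\rceil$ in the definition of the black squares making the period lengths differ by at most one. Once that counting inequality is established, the choice of $S$ and the conclusion that $\rmatrices$ is cyclic follow from Proposition~\ref{LLOM} exactly as in Lemma~\ref{LBASE}.
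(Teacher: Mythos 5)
Your overall plan (induction, reduction to the case where the top travel hugs the first band, and a mirror-counting argument) matches the spirit of the paper's proof, but the decisive step --- actually producing a set $S$ of at most $k$ columns and verifying that $\rmatrices$ is cyclic --- is not carried out, and the mechanism you sketch for it is not the one that makes the argument work. The paper runs a double induction, on $r$ and on $k$: in the critical case it reorients a \emph{single} column at the first single black block and then tracks the \emph{new} top travel $\TT'$. Either $\TT'$ stays below $\UD$ for good (so one reorientation already makes the matroid cyclic), or $\TT'$ re-enters $\UD\cap\LD$ at some $a_{i,j}$; the mirror-counting argument is used there not to bound the row of BT but to derive a contradiction ($i=i'$) excluding the range $3\le i\le l$, which forces the re-entry point to lie past the first single block. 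Beyond that point the diagonal contains only $k-3$ single blocks, i.e.\ it is a chessboard of type $k-1$, so the induction hypothesis on $k$ supplies the remaining at most $k-1$ reorientations, for a total of $k$. This ``one column now, $k-1$ by induction on the number of single blocks'' accounting is the heart of the proof and is absent from your plan, whose induction runs on $r$ only.

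By contrast, you propose to choose all of $S$ at once at the end, as ``the at most $k-1$ columns straddling the single black blocks in the first period together with the one column handled in the base case,'' justified by a claim that BT sits within $k-1$ rows of the top at a decisive column. Two problems: first, each period of the diagonal contains at most one single block (there are $k-2$ of them in total, spaced $s$ rows apart), so the set you describe is not well defined and its cardinality bound is not established; second, the symmetric-mirroring property relates the travels of the \emph{original} matrix, but once you reorient even one column the travels change, so knowing where the original BT sits does not by itself show that your chosen $S$ makes the reoriented matroid cyclic --- one must follow the successive travels after each reorientation, which is precisely what the paper's inductive structure is designed to do. As written, the proposal identifies the right ingredients but leaves the essential verification as an acknowledged ``obstacle,'' so it does not yet constitute a proof.
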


\begin{proof} 

As before, this proof will also work by induction for both $k$ and $r.$ Let $k=3$. Although  the matrix  $\mathcal{A}$ represents a matroid and therefore $r > 7$, the purely combinatorial property holds for chessboards with $3\leq r.$  The proof will follow by induction on $r$, so first consider the case $r=3.$ In this case the chessboard has four columns and three rows, and it is easily seen that for any TT of an acyclic matrix, three reorientations are more than enough to make the travel end at row $3$.

Now suppose the lemma holds for all $r < r^{*}.$ Let $r=r^{*}$, so $n=2r-2$ and $s=r-1.$ Then there is precisely one single black block in the diagonal.  Both $\TT \cap \UD \cap \LD \neq \emptyset$ and $\LT \cap \UD \cap \LD \neq \emptyset.$ Let $i$ be the largest $1\leq i \leq r$ such that $a_{i, j}\in \TT \cap \UD \cap \LD$ or the smallest $i$ such that $a_{i, j}\in \LT \cap \UD \cap \LD$.

If $1< i$ for TT  or $i< r$ for LT, by the induction hypothesis, the lemma holds. 

Suppose $i=1.$ If $l=1$, then TT takes the elements $\{  a_{1,1}, a_{1,2}, a_{1, 3}\}.$ Hence if column one is reoriented, the new top travel, $\TT'$, takes the elements $\{  a_{1,1}, a_{1, 2}, a_{2, 2}, a_{2, 3}\}.$ But $a_{2, 2} \in \UD \cap \LD$ and after column two there are only double blocks in the diagonal so, by Lemma \ref{LBASE}, the lemma holds. 

If $l>1$,  then TT takes elements $\{  a_{1,1}, a_{1, 2}, a_{1, 3} a_{1, 4}\}$ and reorienting column one,  the new top travel, $\TT'$, takes elements $\{  a_{1,1}, a_{1, 2}, a_{2, 2}, a_{2, 3}, a_{3,3},a_{3,4}\}$, hence traveling below UD.

If $\TT'$ never crosses UD again the lemma holds. 

Therefore, suppose  $\TT'\cap \UD\cap \LD \neq \emptyset.$ Let  $i$ be the smallest $1\leq i \leq r$ such that $a_{i, j}\in \TT' \cap \UD \cap \LD.$ By lemma \ref{LBASE}, if $i>l$ the lemma holds. Thus, the only case left is  $3\leq i \leq l$ and $j=2i-1.$


\begin{figure}[t] 
\centering
\includegraphics[width=12cm]{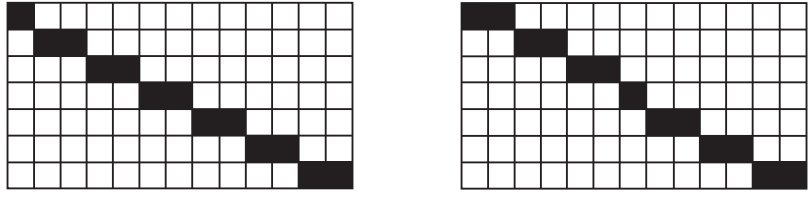} 
\caption{Two different types of chessboards valid for rank $8$ matrices.}
\label{fig: varioscasos}
\end{figure}

The original TT passes through an element $a_{i',2i-1}$ with with $i'<i.$ So between column $3$ and column $2i-1$, TT makes $2i-4$ horizontal movements and $i'-1$ vertical movements. Given that $\TT'$ and TT are strictly separated by the diagonal of black blocks, the number of vertical movements made by $\TT'$ between columns $3$ and $2i-1$ equals $2i-i'-3.$ But by hypothesis, the number of vertical movements $\TT'$ makes between columns $3$ and $2i-1$ is precisely $i-3.$ Then $i-3=2i-i'-3$, so $i=i'$, a contradiction.  

Then if  $a_{i, j}\in \TT' \cap \UD \cap \LD$, necessarily $i > l$ and, by Lemma \ref{LBASE}, the lemma holds for $k=3.$

Suppose now that for each $k< k^{*}$, the lemma holds for all $2k \leq r$. Let $k=k^{*}.$ Both $\TT \cap \UD \cap \LD \neq \emptyset$ and $\LT \cap \UD \cap \LD \neq \emptyset.$ Let $i$ be the largest $1\leq i \leq r$ such that $a_{i, j}\in \TT \cap \UD \cap \LD$ or the smallest $i$ such that $a_{i, j}\in \LT \cap \UD \cap \LD$. 

If $l< i$ for TT  or $i< (k-1)s +l$ for LT, by the induction hypothesis for $k$, the lemma holds. 

Suppose $i=l$. Then TT takes elements $\{  a_{i,j}, a_{i,j+1}, a_{i, j+2}\}.$ Hence if column one is reoriented,  the new top travel, $\TT'$, takes the elements $\{  a_{i,j}, a_{i, j+1}, a_{i+1, j+1}, a_{i+1, j+2}\}.$ But $a_{i+1, j+1} \in \UD \cap \LD$ and after column two there are only $k-3$ single blocks in the diagonal, so by the induction hypothesis, the lemma holds. 

If $i<l$  then TT takes the elements $\{  a_{i,j}, a_{i, j+1}, a_{i, j+2},  a_{i, j+3}\}$ and reorienting column j,  the new top travel, $\TT'$, takes elements $\{  a_{i,j}, a_{i, j+1}, a_{i+1, j+1}, a_{i+1, j+2}, a_{i+2,j+2},a_{i+2,j+3}\}.$ If $\TT'$ never crosses UD again the lemma holds. 

Therefore suppose  $\TT'\cap \UD\cap \LD \neq \emptyset.$ Let  $i'$ be the smallest $1\leq i' \leq r$ such that $a_{i', j'}\in \TT' \cap \UD \cap \LD.$ By the induction hypothesis, if $i'>l$, the lemma holds. Thus, the only case left is  $3\leq i' \leq l$ and $j'=2i'-1.$

The original TT passes through an element $a_{i'',2i'-1}$ with $i''<i'$. So between column $j+2$ and column $2i'-1$, TT makes $2i'-j-3$ horizontal movements and $i''-1$ vertical movements. Given that $\TT'$ and TT are strictly separated by the diagonal of black blocks between columns $j+2$ and $2i'-1$, the number of vertical movements made by $\TT'$  between those columns equals $2i'-j-i''-2.$ But by hypothesis, the number of vertical movements $\TT'$ makes between columns $j+2$ and $2i'-1$ is precisely $i'-j-2.$ Then $i'-j-2=2i'-j-i''-2$, so $i'=i''$, a contradiction.  

Therefore the lemma holds for all $3\leq k$ and $2k \leq r$.
\end{proof}

Summarizing, these two lemmas prove \[\nu{(d,k)} \leq 2d-k+1 \quad \forall k \geq 2,\] the upper bound.

\subsection{Proof of the Lower Bound}

 For the proof of the lower bound it is better to use the statement of the problem set in terms of partitions of points. 
 
\begin{lemma}\label{RADONSOTE} Let $X$ be a set of $(k+1)d+(k+2)$ points in general position in $\mathbb{R}^d$. Then there is a partition of $X$ into two sets,  $A, B$, such that $A \cap B = \emptyset$ and $A \cup B = X$, with the following property:
 \begin{gather*}
{\conv  \left(A \backslash \{x_1, x_2, \dots , x_k\} \right)} 
\bigcap 
{\conv  \left(B \backslash \{x_1, x_2, \dots , x_k\} \right)} 
\neq \emptyset, \\
 \forall \: \:    {\{x_1, x_2,\dots , x_k\}}  \subset X.\end{gather*}
\end{lemma}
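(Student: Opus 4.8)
The plan is to prove this by a Radon-type / pigeonhole partition argument, using the classical fact (Radon's theorem in the strong, "general position" form) that any $d+2$ points in general position in $\mathbb{R}^d$ admit a \emph{unique} Radon partition, i.e.\ a split into two subsets whose convex hulls intersect, and moreover the intersection point is in the relative interior of both simplices. The key idea: we want a single partition $X = A \sqcup B$ that remains a Radon-type partition even after deleting \emph{any} $k$ points. Since removing $k$ points can destroy at most... we need enough "redundancy". With $(k+1)d + (k+2) = (k+1)(d+1) + 1$ points, the intuition is that we can find $k+1$ disjoint Radon partitions (each on $d+2$ points) plus build the global partition so that deleting $k$ points kills at most $k$ of these $k+1$ "certificates," leaving at least one intact.

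More concretely, here is the route I would take. First, partition $X$ greedily into $k+1$ pairwise disjoint blocks $Y_0, Y_1, \dots, Y_k$, each of size exactly $d+2$ (this uses $(k+1)(d+2)$ points; note $(k+1)d+(k+2) = (k+1)(d+2) - k$, so actually the count is slightly short — I would instead take blocks of size $d+2$ where possible and argue more carefully, or re-derive the precise bound, since $(k+1)d+(k+2)$ is exactly $(k+1)(d+1)+1$). Let me use the correct decomposition: with $(k+1)(d+1)+1$ points, I would build the sets $A,B$ incrementally. For each block I invoke Radon to get a partition $Y_i = A_i \sqcup B_i$ with $\conv(A_i)\cap\conv(B_i)\neq\emptyset$; set $A = \bigcup A_i$, $B = \bigcup B_i$ (distributing any leftover points arbitrarily). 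Then for any choice of $k$ points $\{x_1,\dots,x_k\}$ to delete, by pigeonhole at least one block $Y_{i_0}$ contains none of them, so $A_{i_0}\subseteq A\setminus\{x_1,\dots,x_k\}$ and $B_{i_0}\subseteq B\setminus\{x_1,\dots,x_k\}$, whence $\conv(A\setminus\{x_1,\dots,x_k\})\cap\conv(B\setminus\{x_1,\dots,x_k\})\supseteq\conv(A_{i_0})\cap\conv(B_{i_0})\neq\emptyset$. That establishes the claim once the block sizes are reconciled with the stated cardinality $(k+1)d+(k+2)$.

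The main obstacle, and the step I expect to require genuine care, is the arithmetic matching: a single Radon certificate needs $d+2$ points, and $k+1$ disjoint ones would need $(k+1)(d+2) = (k+1)d + (k+2) + k$ points, which is $k$ more than we are given. So the clean pigeonhole above is slightly too wasteful. The fix is to let the blocks \emph{share} points in a controlled way, or to use overlapping Radon partitions: I would choose the blocks so that consecutive ones overlap in a single point, or alternatively observe that it suffices to have $k+1$ Radon certificates that are "robust to deletion of $k$ points" without being fully disjoint — e.g.\ certificates $C_0,\dots,C_k$ such that no $k$-element set of $X$ meets all of them, which needs only that the $C_i\setminus C_j$ are suitably large. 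Getting the bookkeeping to yield exactly $(k+1)d+(k+2)$ — rather than $(k+1)(d+2)$ — is where the real work lies, and I would handle it by reusing a common "core" of points across several Radon partitions (since a set of $d+2$ points has a Radon partition, and adding one fresh point to a $(d+1)$-point affinely independent core gives a new certificate each time), so that $(k+1)$ fresh points beyond a shared core of size $d+1$ suffice, giving $d+1+(k+1) = d+k+2$ — too few — indicating the honest count is somewhere between, and the construction must balance core-sharing against deletion-robustness to land precisely on $(k+1)d+(k+2)$.

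Finally I would verify the general-position hypothesis is used correctly (to guarantee the Radon partitions exist and that the convex hulls actually meet, not just touch degenerately — relative interiors intersect), and check the edge case $k=2$ against Theorem~\ref{TPARTITION}'s bound $\lambda(d,2)\le 3d+4 = (k+1)d+(k+2)$, which is consistent. I would also double-check that the partition produced is a genuine \emph{subdivision} $A\cup B = X$, $A\cap B=\emptyset$, by assigning every point of $X$ — including any not used in a Radon certificate — to exactly one of $A$ or $B$, which is automatic in the block construction. The crux remains the counting lemma showing that $(k+1)d+(k+2)$ points can be organized into a family of Radon certificates robust to the removal of any $k$ points.
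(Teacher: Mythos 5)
There is a genuine gap, and you have located it yourself: the entire difficulty of the lemma is the step you defer. Your disjoint-block pigeonhole argument is sound as far as it goes, but it needs $k+1$ pairwise disjoint Radon certificates of $d+2$ points each, i.e.\ $(k+1)(d+2)$ points, which is $k$ more than the stated $(k+1)d+(k+2)$. So as written you prove only the weaker bound $\lambda(d,k)\le (k+1)(d+2)$. The proposed repairs do not close the gap. Overlapping certificates run into a consistency obstruction you do not address: for $d+2$ points in general position the Radon partition is \emph{unique}, so a point shared by two certificates has its side ($A$ or $B$) forced by each, and the two forcings need not agree; moreover a deleted shared point can kill several certificates at once, so the pigeonhole count must be redone. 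Your core-sharing variant is, by your own arithmetic, far short ($d+k+2$ points), and the claim that the truth lies ``somewhere between'' is not an argument. In short, the crux — a family of deletion-robust certificates on exactly $(k+1)d+(k+2)$ points — is asserted to exist but never constructed, and it is not clear this route can reach the claimed bound at all.

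The paper takes an entirely different path, which is worth knowing because it sidesteps the counting problem. It first exhibits a \emph{single} $k$-divisible configuration of $(k+1)d+(k+2)$ points as the Gale transform of the vertex set of a cyclic polytope. It then shows the property propagates to \emph{all} general-position configurations of that size by a deformation argument: replace one point $x_1$ by $(1-t)x_1+ty$, let $t_0$ be the supremum of $t$ for which $k$-divisibility holds, and derive a contradiction from $t_0<1$ by extracting a limiting separating hyperplane $H$. Radon's theorem forces $H$ to contain $d+1$ points of the configuration; the remaining $kd+k+1$ points off $H$ are handled by the induction hypothesis on $k-1$ (the base case $k=1$ being Larman's theorem), and the two partial partitions are merged into a partition witnessing divisibility for some $t>t_0$. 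The deletion-robustness thus comes from induction on $k$ together with the hyperplane structure, not from disjointness of certificates, which is precisely how the paper lands on $(k+1)d+(k+2)$ rather than $(k+1)(d+2)$.
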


\begin{proof} The proof will follow by induction. The case where  $k=1$ was proved by D.\ Larman in \cite{DL}. Let $k\geq 2$ and suppose the statement of the lemma is true for all $k < k^{*}.$ Then it only has to be proved that the lemma is true for $k=k^{*}$. 

For brevity, if a set of points X has the property stated in Lemma \ref{RADONSOTE}, then we call $X$  \emph{$k$-divisible}.

The $(k+1)d+(k+2)$ vertices of a cyclic polytope in $\mathbb{R}^{k(d+1)}$ are in general position. Through a Gale transform, these points can be transformed into a set $X$ of $(k+1)d+(k+2)$ points in $\mathbb{R}^{d}$ such that they are $k$-divisible. 

So there are $(k+1)d+(k+2)$ points in general position which are $k$-divisible. The property of being $k$-divisible is closed among all sets of $(k+1)d+(k+2)$ points in general position in $\mathbb{R}^d$. Let $ \{x_1, x_2, \dots , x_n \}$ be a $k$-divisible set. It is therefore enough to prove that if $ \{ y, x_1, x_2, \dots , x_n \}$, where $n= (k+1)d+(k+2)$, is a set of points in general position  in $\mathbb{R}^d$ then the set $ \{y, x_2, \dots , x_n \}$ is also $k$-divisible. 

Let T be the set of real numbers t such that \[X(t)=\left\{(1-t)x_1+ty, x_2, \dots , x_n | \: 0 \leq t \leq 1 \right\}\] is $k$-divisible. T is a non-empty closed subset of $[0,1]$. Suppose \[t_0= \sup_{t \in T}{t}< 1\] and let $x_1(t)=(1-t)x_1+ty$ for all $t \in \mathbb{R}.$ Then the set $X(t_0)$ is $k$-divisible with a subdivision $A(t_0)=\{x_1(t_0), x_2, \dots , x_r \}$ and $B(t_0)=\{x_{r+1}, x_2, \dots , x_n \}$ (with some relabeling possibly needed).

By definition, for each $t > t_0 $ there exist points \[\{x_{j_1}(t), x_{j_2}(t) , \dots  ,x_{j_k}(t)\}  \subset X(t)\] such that if $A(t)=\{x_1(t), x_2, \dots , x_r \}$ and $B(t)=\{x_{r+1}, x_2, \dots , x_n \}$, then 
\[ 
\conv  \left(A(t) \backslash \{x_{j_1}(t) , \dots  ,x_{j_k}(t)\} \right)
\cap 
\conv  \left(B(t) \backslash \{x_{j_1}(t), \dots  ,x_{j_k}(t)\} \right)
= \emptyset.\]

Since there are only finitely many combinations of $n$ points in subsets of size $k$, there is a sequence $t_n \to {t_0}^+$ as $n \to \infty$ such that $\{x_{j_1}(t_n), x_{j_2}(t_{n}) , \dots  ,x_{j_k}(t_n)\}$ is fixed and equal to $\{x_{j_1}, x_{j_2}, \dots  ,x_{j_k}\}.$ Also, for each $t > t_0$ there is a hyperplane $H(t)$ such that \[{\conv  \left(A(t) \backslash \{x_{j_1}(t), x_{j_2}(t) , \dots  ,x_{j_k}(t)\} \right)} \subset {H(t)}^+\] and \[ {\conv  \left(B(t) \backslash \{x_{j_1}(t), x_{j_2}(t) , \dots  ,x_{j_k}(t)\} \right)} \subset {H(t)}^-.\] So there is a subsequence of the sequence of hyperplanes $\{{H(t_n)} \}$  that converges to a hyperplane $H$, which necessarily weakly separates
\[
{\conv  \left(A(t_0) \backslash \{x_{j_1}, x_{j_2}, \dots  ,x_{j_k}\} \right)} \text{ from } {\conv  \left(B(t_0) \backslash \{x_{j_1}, x_{j_2}, \dots  ,x_{j_k}\} \right)}. 
\]

By hypothesis,
\[ {\conv  \left(A(t_0) \backslash \{x_{j_1}, x_{j_2}, \dots  ,x_{j_k}\} \right)} 
\cap {\conv  \left(B(t_0) \backslash \{x_{j_1}, x_{j_2}, \dots  ,x_{j_k}\} \right)} \neq \emptyset,\] which implies that
\[ \conv  \left(A(t_0) \backslash \{x_{j_1}, x_{j_2}, \dots  ,x_{j_k}\} \right)
\cap  \conv  \left(B(t_0) \backslash \{x_{j_1}, x_{j_2}, \dots  ,x_{j_k}\} \right) \cap H  \neq \emptyset.\]

By Radon's theorem, since the points of $X(0)$ are in general position, the plane $H$ has to contain $d+1$ points of $X(t_0)$, one of which has to be the point $x_1(t_0)$ and none of which are in the set $\{x_{j_1}, x_{j_2}, \dots  ,x_{j_k}\}$, and the points in $X(t_0) \cap H$ can be divided into two sets $A'(t_0)$ and $B'(t_0)$ such that \[ \conv ( A'(t_0) )\cap \conv  ( B'(t_0)) \neq \emptyset . \]  

Consequently, by the induction hypothesis, there are $kd+k+2$ points in general position outside the plane $H$ for which we can find a partition $A''(t_0)$, $B''(t_0)$ such that 
\begin{gather*} 
{conv \left(A''(t_0) \backslash \{x_{i_1}, x_{i_2}, ..., x_{i_{k-1}}\} \right)} 
\cap {conv \left(B''(t_0)  \backslash \{x_{i_1}, x_{i_2}, ..., x_{i_{k-1}}\} \right)} \neq \emptyset\\
 \forall \{x_{i_1}, x_{i_2}, ..., x_{i_{k-1}}\} \subset X(t_0).
 \end{gather*}

Suppose without loss of generality that $x_1(t_0) \in A'(t_0)$. Then for $t_0 \leq t \leq 1$, say $x_1(t) \in H^{+}.$  Now consider the following partition for $X(t)$: 
\[
A(t)= A''(t_0) \cup (A'(t_0) \backslash \{x_1(t_0))\} \cup \{x_1(t)\}, \quad B(t)= B''(t_0) \cup B'(t_0).
\]

For all $X_k =\{x_{i_1}, x_{i_2}, \dots , x_{i_k}\}$, subsets of $X(t)$, if $\left| (X_k \cap H) \cup \{x(t)\} \right|\geq 1$, the lemma holds. So the only case remaining to be dealt with is when  $X_k \subset \{ A''(t_0) \cup B''(t_0) \}$.  \\

Observe that if there is $x_a \in \{ A''(t_0)\backslash X_k \cap H^{-} \}$ or $x_b \in \{ B''(t_0)\backslash X_k \cap H^{+} \}, $ then for some $t=t_0+\epsilon$, 
\begin{equation} \label{eqn: uno}
 \emptyset \neq \conv  \left( A'(t) \cup \{x_a\} \right) \cap \conv  \left( B'(t) \right) \subset 
\conv  \left( A(t) \backslash X_k \right) \cap \conv  \left( B(t) \backslash X_k \right) 
\end{equation}or 
\begin{equation} \label{eqn: dos}
 \emptyset \neq \conv  \left( A'(t)  \right) \cap \conv  \left( B'(t) \cup \{x_b\} \right) \subset 
\conv  \left( A(t) \backslash X_k \right) \cap \conv  \left( B(t) \backslash X_k \right). 
\end{equation}

So we only need to examine the case when there is $X_{k}$ such that $\{ A''(t_0)\backslash X_k \cap H^{-} \}= \emptyset$  and $\{ B''(t_0)\backslash X_k \cap H^{+} \}=\emptyset.$ In such a situation, $B''(t_0)\backslash X_k \subset H^{-}$ and $A''(t_0)\backslash X_k \subset H^{+}$. At least one of 
\[
B''(t_0)\backslash X_k \neq \emptyset \quad \text{or}\quad A''(t_0)\backslash X_k \neq \emptyset
\]
$A''(t_0)$ and $B''(t_0)$ can be swapped in the partition, and one of \eqref{eqn: uno} or \eqref{eqn: dos} will hold. 
This proves that  $\exists \: t>t_{0}$ such that there is a partition $A(t), B(t)$ of $X(t)$ for which for any $X_{k} \subset X$, with $|X_{k}|=k$,
\[\conv (A(t)\backslash X_{k}) \cap \conv (B(t)\backslash X_{k}) \neq \emptyset\] holds, which is a contradiction. Therefore $t_{0}=1$, and the lemma holds.
\end{proof}

Together, Lemmas \ref{LBASE}, \ref{GENERAL} and \ref{RADONSOTE} constitute the proof of Theorem \ref{TPARTITION} and tracing back to equivalences presented in Section \ref{equivalent}, Theorem \ref{PROJECTIVE} has also been proved.

\section{Final remarks}

McMullen's problem was originally posed as a geometrical property of a configuration of points, and even the generalization dealt with in this paper is a geometrical interpretation. However, the partition problem to which it is equivalent is very interesting in itself and does not need to have any restriction on $k$, the number of points removed.  

The upper bound in Theorem \ref{TPARTITION} holds in the general setting even if $k \geq \frac {d}{2}$; in particular the bounds are sharp for $d=2$ and $k=2,3$ \cite{NAT}.

Furthermore, by increasing the number of partitions allowed, the following Tverberg type question arises:

 \emph{Determine the smallest number $\lambda{(d, s, k)}$ such that for any set $X$ of $\lambda{(d, s ,k)}$  points in $R^d$ there exists a subdivision of X into s sets $A_{1},A_{2},\dots ,A_{s}$ such that \[\bigcap_{i = 1}^s  \conv {(A_i \backslash {\{x_1, x_2, \dots , x_k\}})}  \neq \emptyset, \quad \forall \;   {\{x_1, x_2, \dots , x_k\}}  \subset X.\]}

This problem is one of many questions, such as Reay's conjecture, that rather than studying \emph{when} the partitions of the sets intersect, focuses on \emph{how} they intersect.  

Superficially, it seems that the dimension of the intersection of the convex hulls of partitions might bear a relationship to $k$-divisibility. 

The following very loose bound is a direct consequence of Tverberg's theorem.

\begin{lemma} Let $\lambda(d,s,k)$ be the smallest number such that each set $X$ of  $\lambda(d,s,k)$ points in $\mathbb{R}^{d}$ can be divided into $s$ pairwise disjoint sets $A_{1},A_{2},\dots ,A_{s}$ such that $\bigcap_{i = 1}^s  \conv {(A_i \backslash {\{x_1, x_2, \dots , x_k\}})}  \neq \emptyset$, for all subsets ${\{x_1, x_2, \dots , x_k\}}  \subset X.$ Then $\lambda(d,s,k)\geq (k+1)( (s-1)(d-1)+1)$.
\end{lemma}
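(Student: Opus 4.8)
The plan is to deduce this lower bound directly from Tverberg's theorem by exhibiting a configuration of slightly fewer points which is \emph{not} $k$-divisible in the $s$-fold sense. Recall Tverberg's theorem: any $(s-1)(D+1)+1$ points in $\mathbb{R}^D$ admit a partition into $s$ sets whose convex hulls have a common point, and this number is tight --- there exist configurations of $(s-1)(D+1)$ points in general position with no such Tverberg partition. The idea is that deleting a set $\{x_1,\dots,x_k\}$ of $k$ points from $X$ and asking for an $s$-fold Tverberg partition of the remainder is, in the worst case over the choice of the $k$ deleted points, governed by the Tverberg number in a lower-dimensional or smaller-cardinality regime; to force failure we want the surviving point set (after a worst-case deletion) to be a Tverberg-critical configuration.

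First I would set $D=d-1$ and take a Tverberg-critical configuration $Y$ of $(s-1)d = (s-1)((d-1)+1)$ points in general position in a hyperplane $\mathbb{R}^{d-1}\subset\mathbb{R}^d$ that admits no $s$-fold Tverberg partition. Then I would build $X$ by taking $k+1$ ``clustered copies'' of this obstruction: concretely, replace each point of $Y$ by a tiny cluster, or rather take $k$ extra points $x_1,\dots,x_k$ in general position together with $(k+1)$ nearly-coincident perturbed copies of the critical set, arranged so that for \emph{every} choice of $k$ points to delete, what survives still contains a full perturbed copy of the Tverberg-critical configuration $Y$ lying essentially in a hyperplane. Since small perturbations of a general-position Tverberg-critical set in $\mathbb{R}^{d-1}$ remain Tverberg-critical, and since a set lying within $\eta$ of a hyperplane cannot have $s$ convex hulls meeting off that hyperplane when the in-hyperplane configuration already fails, the surviving set has no $s$-fold Tverberg partition. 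Counting: to guarantee that a copy of the $(s-1)(d-1)+1$-element obstruction survives every deletion of $k$ points one needs $k+1$ disjoint copies, giving $(k+1)\bigl((s-1)(d-1)+1\bigr)$ points, and this configuration witnesses $\lambda(d,s,k) > (k+1)\bigl((s-1)(d-1)+1\bigr) - 1$, i.e. the claimed inequality.

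The cleanest way to organize the pigeonhole is the standard ``$k+1$ disjoint witnesses'' argument: partition $X$ into $k+1$ blocks $X^{(0)},\dots,X^{(k)}$, each an independent copy of a $\bigl((s-1)(d-1)+1\bigr)$-element Tverberg-critical general-position configuration (pushed into $k+1$ distinct but nearly-parallel affine hyperplanes, or the same hyperplane with generic perturbation so the union is in general position in $\mathbb{R}^d$). Any $k$-subset $\{x_1,\dots,x_k\}\subset X$ meets at most $k$ of these $k+1$ blocks, so some block $X^{(j)}$ survives intact; the restriction of any candidate partition $A_1,\dots,A_s$ of $X\setminus\{x_1,\dots,x_k\}$ to $X^{(j)}$ would be an $s$-fold Tverberg partition of that block, contradicting its criticality. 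Hence no $s$-fold partition of $X\setminus\{x_1,\dots,x_k\}$ with a common intersection point exists for that particular $k$-set, so $X$ is not $k$-divisible and $\lambda(d,s,k)$ must be strictly larger than $|X| = (k+1)\bigl((s-1)(d-1)+1\bigr) - 1$.

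The main obstacle is the general-position requirement: the blocks must be placed so that their union is in general position in $\mathbb{R}^d$ while each block individually retains the Tverberg-critical property (which is itself a general-position statement in dimension $d-1$). This is a genericity/perturbation argument --- Tverberg-criticality is an open condition among general-position configurations, so a sufficiently small generic perturbation of each planar copy preserves it --- but it must be stated carefully, and one must confirm that a surviving block's failure in the hyperplane genuinely obstructs an $s$-fold intersection in the ambient $\mathbb{R}^d$ (this follows because a common point of the $s$ convex hulls, intersected with the affine hull of the block, would still be a common point of the $s$ hulls taken within that hyperplane, which do not meet). A secondary subtlety is that the surviving block could be used as one of the $A_i$'s only partially --- but since the $A_i$ partition $X\setminus\{x_1,\dots,x_k\}$, their traces on $X^{(j)}$ partition $X^{(j)}$, and a common point of $\conv(A_i)$ lies in a common point of $\conv(A_i\cap X^{(j)})$ only after intersecting with the block's hyperplane; the correct statement is that a common point of all $s$ hulls restricted to the hyperplane of $X^{(j)}$ would exist, contradicting criticality of $X^{(j)}$ since $X^{(j)}$ spans that hyperplane and the traces $A_i\cap X^{(j)}$ form an $s$-partition of it. Once these genericity points are pinned down the counting is immediate.
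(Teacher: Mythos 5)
The paper offers no argument for this lemma beyond the assertion that it is ``a direct consequence of Tverberg's theorem,'' so there is no written proof to compare yours against; judged on its own terms, however, your argument has a genuine gap at its central step. You place $k+1$ pairwise disjoint blocks $X^{(0)},\dots,X^{(k)}$, each a Tverberg-critical configuration inside its own hyperplane $H_j$, and you argue that if some partition $A_1,\dots,A_s$ satisfied $\bigcap_i \conv(A_i\setminus S)\neq\emptyset$, then intersecting with the hyperplane of a block $X^{(j)}$ untouched by $S$ would produce a common point of the hulls of the traces $A_i\cap X^{(j)}$, contradicting that block's criticality. That implication is false: a common point of the ambient hulls need not lie on $H_j$ at all, and even a point of $\bigcap_i\bigl(\conv(A_i\setminus S)\cap H_j\bigr)$ need not belong to any $\conv(A_i\cap X^{(j)})$, because $\conv(A_i\setminus S)\cap H_j$ is in general strictly larger than the hull of $(A_i\setminus S)\cap H_j$ --- segments joining points of $A_i$ on opposite sides of $H_j$ cross $H_j$ far from $A_i$'s trace. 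A minimal example: $d=2$, $s=2$, block $X^{(j)}=\{p,q\}$ on a line $H_j$, $A_1=\{p,r\}$, $A_2=\{q,r'\}$ with $r,r'$ off the line; the two segments can cross even though $\conv(A_1\cap X^{(j)})=\{p\}$ and $\conv(A_2\cap X^{(j)})=\{q\}$ are disjoint. The underlying obstacle is structural: failure to admit a Tverberg partition is not inherited by supersets, so ``some critical block survives the deletion intact'' obstructs nothing. Indeed, for most parameter values your $X$ has more than $(s-1)(d+1)+1$ points, so Tverberg's theorem itself guarantees that $X$ admits $s$-fold partitions mixing points from several blocks, and your argument gives no reason why none of these is robust to $k$ deletions.

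Two bookkeeping inconsistencies are also worth flagging, though they are secondary. You first take the critical configuration in $\mathbb{R}^{d-1}$ to have $(s-1)d$ points (the correct Tverberg-critical cardinality in dimension $d-1$) but later use blocks of $(s-1)(d-1)+1$ points; these agree only when $s=2$ (both sizes can be made critical, but the switch should be acknowledged). And $k+1$ blocks of size $(s-1)(d-1)+1$ give $|X|=(k+1)\bigl((s-1)(d-1)+1\bigr)$, not the $(k+1)\bigl((s-1)(d-1)+1\bigr)-1$ you state at the end; a non-divisible set of the former size would prove a bound one larger than claimed, so the discrepancy signals that the counting was never pinned down. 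To prove the stated inequality one must exhibit, for \emph{every} partition $A_1,\dots,A_s$ of a suitable set of $(k+1)\bigl((s-1)(d-1)+1\bigr)-1$ points, some $k$-subset whose removal separates the hulls; this requires an obstruction that persists in supersets (for instance a degrees-of-freedom count for points in strongly general position, or forcing some part to become too small after deletion), not block-wise Tverberg-criticality.
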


\nocite{*}
\bibliographystyle{amsplain}
\bibliography{bib_cdm_projective}

\end{document}